\documentclass[11pt]{amsart}

\usepackage[hmargin=0.8in,height=8.8in]{geometry}
\usepackage{bbm}
\usepackage{amssymb,amsthm, times}
\usepackage{delarray,verbatim}
\usepackage{srcltx}
\usepackage{ifpdf}
\ifpdf
\usepackage[pdftex]{graphicx}
 \else
\usepackage[dvips]{graphicx}
 \fi

\usepackage{caption}
\usepackage{ifpdf}
\usepackage{xcolor}
\usepackage{mathrsfs}
\definecolor{webgreen}{rgb}{0,.5,0}
\definecolor{webbrown}{rgb}{.8,0,0}
\definecolor{emphcolor}{rgb}{0.95,0.95,0.95}

\usepackage{hyperref}
\hypersetup{%
          colorlinks=true,
          linkcolor=webbrown,
          filecolor=webbrown,
          citecolor=webgreen,
          breaklinks=true}
\ifpdf \hypersetup{
            pdfstartview=FitH, %%Fit, FitB, FitH
            bookmarksopen=true,
            bookmarksnumbered=true
} \else \hypersetup{dvips} \fi
\usepackage{enumitem}
\usepackage{comment}
\usepackage[normalem]{ulem}
\usepackage[colorinlistoftodos,prependcaption,textsize=small]{todonotes}
\setuptodonotes{color=blue!20} %Todoリストを作った. \todo{}とすると. 青いボックスで表示される
\usepackage{tikz}

\numberwithin{equation}{section}

\newtheorem{proposition}{Proposition}[section]
\newtheorem{corollary}{Corollary}[section]
\newtheorem{remark}{Remark}[section]
\newtheorem{lemma}{Lemma}[section]

\newtheorem{definition}{Definition}[section]
\numberwithin{remark}{section} \numberwithin{proposition}{section}
\numberwithin{corollary}{section}

\newcommand {\R}{\mathbb{R}}

\newcommand {\F}{\mathcal{F}}

\newcommand {\p}{\mathbb{P}}

\newcommand {\E}{\mathbb{E}}

\newcommand{\eps}{\varepsilon}
\newcommand{\diff}{{\rm d}}

\newcommand{\conn}{\quad\text{and}\quad}
\newcommand{\1}{\mbox{1}\hspace{-0.25em}\mbox{l}}
\newcommand{\lev}{L\'{e}vy }

\newcommand{\goes}{\rightarrow \infty}

\title{A dual characterization and algorithm of general optimal stopping}
\author[M. Egami]{Masahiko Egami}
\address[M. Egami]{Graduate School of Economics,
Kyoto University, Sakyo-Ku, Kyoto, 606-8501, Japan}
\email{egami.masahiko.8x@kyoto-u.ac.jp}
\urladdr{http://www.econ.kyoto-u.ac.jp/{\textasciitilde}egami/}
\thanks{
}
\author[T. Koike]{Tomohiro Koike}
\address[T. Koike]{Graduate School of Economics,
Kyoto University, Sakyo-Ku, Kyoto, 606-8501, Japan}
\email{koike.tomohiro.4e@kyoto-u.ac.jp}
\date{}
\begin{document}
%\tableofcontents
%\listoftodos

\begin{abstract}
We present a novel dual approach to characterizing the solution to optimal stopping problems for strong Feller processes on locally compact, separable metric spaces.  Our characterization builds upon reinterpreting a standard result in optimal stopping theory as a dual formulation, which yields computationally tractable algorithms.  Our framework is applicable to a broad class of Markov processes. First, in the one-dimensional case, while previous studies have managed to comprehensively solve only diffusion and one-sided jump Lévy processes over an infinite time horizon, our proposed method successfully extends to both infinite- and finite-horizon scenarios across all these processes, including two-sided jump Lévy processes.  In addition, for multi-dimensional cases, our framework is applicable to diffusions over an infinite horizon with certain structural features such as convex stopping regions.  To illustrate this versatility, we explicitly solve several challenging problems including a case where the optimal stopping region consists of two disjoint connected components -- a structural feature that remains unaddressed in the literature.    
\end{abstract}

\maketitle \noindent 
\small{\textbf{Key words:} 
Optimal stopping; 
Dual approach; 
Optimal stopping algorithms;
Strong Feller processes;
Jump-diffusion processes;
Multidimensional diffusion processes.\\
%\noindent JEL Classification: G32, D81, C61 \\
\noindent Mathematics Subject Classification (2020) : Primary: 60G40
Secondary: 60J60, 60J76 }\\
%%%%%%%%%%%%%%%%%%%%%%%%%%%%%%%%%%%%%%%%%%%

\section{Introduction}\label{sec:intro}

\subsection{Problem setting and our goal}\label{sec:problem}
Let   $(\Omega, \F, \p)$ be  a probability space, where $\Omega$ is the set of all possible outcomes, and $\p$ is a probability measure defined on $\F$.
Let $X=\{X_t;\ t\geq 0\}$ be a transient strong Feller process on this probability space taking values in a locally compact separable metric space $D$.
The notations $\p_x$ and $\E_x$ denote the probability measure and the corresponding expectation operator, respectively, associated with the process starting at $x$.
Let $\zeta := \inf \{ t: \ X_t \in \Delta\}$ be the lifetime of the process, where $\Delta \notin D$ is a cemetery point adjoined to $D$.

We consider the following optimal stopping problem:
\begin{align}\label{eq:value-function}
    v(x) = \sup_{\tau \in \mathcal{T}} \E_x[g(X_\tau)]; \quad x \in D,
\end{align}
where $\mathcal{T}$ is the set of all $\mathbb{F}$-stopping times.
We assume that the reward function
\(g\) is a nonnegative function in \(\mathcal{C}_0(D)\), the space of continuous functions on \(D\) that vanish at $\Delta$. For simplicity, we write \(\mathcal{C}_0\) instead of \(\mathcal{C}_0(D)\).
In Section \ref{sec:general}, we consider the more general case in which $g$ need not vanish at $\Delta$.
Although we do not explicitly consider discounting, there is no loss of generality, since a discounted optimal stopping problem can be reduced to an undiscounted one by considering the corresponding exponentially killed process.
We also note that our framework also accommodates finite-horizon problems. In this case, we consider the Markov process \(\tilde{X}_t=(t,X_t)\) in place of \(X_t\) and the reward function \(\tilde{g}(t,x)=\mathbf{1}_{[0,T]}(t)g(x)\) in place of \(g\), where $T$ is the finite time horizon. Indeed, we solve a finite-horizon problem in Section \ref{subsec:Kou-finite}.

We define the stopping region $\Gamma$ and continuation region $C$ as follows;
\[\Gamma := \{ x \in D; \ v(x)=g(x) \} \conn C := \{ x \in D; \ v(x)>g(x) \}. \]

Our main goal in this paper is to provide a new characterization of the optimal stopping problem \eqref{eq:value-function}, yielding algorithms for its solution.
We use the following \textit{dual} formulation of optimal stopping: for any fixed $x_0 \in D$,
\begin{align}
    v(x_0)=\inf_{f\geq g, f \in \mathcal{R}} f(x_0) = \sup_{\mu \in \mathcal{S}_{x_0}} \langle g, \mu \rangle,\label{eq:duality}
\end{align}
where $\mathcal{R}$ is the space of all excessive functions with respect to $X$
, which is defined in Section \ref{sec:preliminaries},
and $\mathcal{S}_{x_0}$ is the set of laws of the random variable \(X_{\tau }\) under the probability measure \(\p_{x_0}\), taken across all stopping times \(\tau \).
Here,
\[ \langle g,\mu\rangle := \int_D g(x)\mu(dx)\]
denotes the integral of $g$ with respect to the measure $\mu$.
Indeed, \eqref{eq:duality} follows from a standard result in optimal stopping; that is, the value function is the least excessive majorant of $g$ (see \cite{Shiryaev_2008}).
However, this dual formulation derives a quantity which provides a helpful tool to solve optimal stopping, which we refer to as \textit{the duality gap} introduced in Section \ref{sec:algorithm}.

\subsection{Our motivation and contributions}
Optimal stopping problems have been extensively studied across various disciplines.  
Even in the field of infinite-horizon problems, tractable characterizations--in the sense of yielding constructive algorithms for solutions--remain limited only to one-dimensional diffusions (e.g. \cite{DK2003}) and one-dimensional one-sided jump \lev processes (e.g. \cite{egami2025potential}).  The existing studies on two-sided jumps predominantly focus on settings where the continuation region is one-sided (e.g.(\cite{surya2007approach, deligiannidis2009optimal, christensen2013optimal}), leaving problems with complex stopping regions generally unsolved.  Similarly, while notable progress has been made in multi-dimensional settings (e.g., \cite{christensen2019optimal, christensen2011harmonic, christensen2018multidimensional}),
obtaining tractable characterizations remains generally challenging.

Against this backdrop, this paper presents a novel characterization of optimal stopping problems by leveraging the dual formulation in \eqref{eq:duality}. From this formulation, we naturally define the difference between the primal and dual values associated with any candidate set $A$ contained within the  stopping region, which we refer to as the \textit{duality gap}. Our proposed characterization provides a systematic procedure for constructing a set $A$ for which the duality gap vanishes, thereby yielding the solution to the optimal stopping problem. Furthermore, this framework leads to tractable algorithms across a wide range of settings, including:
(i) Markov processes in which every singleton is regular for itself (covering both infinite- and finite-horizon problems);
(ii) Infinite-horizon multi-dimensional diffusion processes with convex stopping regions.
Concrete algorithms for setting (i) are established in Corollaries \ref{coro:naive-algorithm} and \ref{coro:finite-horizon-algorithm}, while setting (ii) is addressed in Corollary \ref{coro:convex-algorithm}. Representative problems for each setting are explicitly solved in Section \ref{sec:examples}.

A key feature of our proposed algorithms presented in Corollaries \ref{coro:naive-algorithm} and \ref{coro:finite-horizon-algorithm} is that they do not require a priori guesses regarding the geometric structure of the stopping (or continuation) region. Consequently, they remain fully applicable even when these regions exhibit non-trivial geometries, such as non-one-sided structures. Indeed, in Sections \ref{subsec:Kou-infinite} and \ref{subsec:Kou-finite}, we successfully solve problems where the stopping region consists of two disconnected components—a scenario that typically poses substantial technical challenges for traditional free-boundary methods (\cite{surya2007approach, deligiannidis2009optimal, christensen2013optimal}).

We also highlight the contrast between our approach and existing Monte Carlo-based algorithms for optimal stopping (e.g., \cite{rogers2002monte, haugh2004pricing, andersen2004primal, longstaff2001valuing}). While Monte Carlo methods similarly avoid making structural assumptions on the stopping region—typically by simulating sample paths or constructing approximate optimal martingales—they compute the value function $v(x)$ only at a specific point $x$. In contrast, our algorithm directly yields the optimal stopping policy, thereby providing a global structural characterization of the stopping region.
Furthermore, a key feature of our algorithm is that it provides an inner approximation of the stopping region: at each finite step, the computed set is guaranteed to be a subset of the true stopping region. This property is particularly useful in practice, as early termination of the algorithm still yields a valid, conservative sub-optimal stopping rule.

It is worth noting that dual formulations of optimal stopping have a long history in the literature. For example, pioneering works by \cite{bismut1977dualite, bismut1979controle} formulate optimal stopping problems as convex optimization problems using randomized stopping times and provide characterizations based on convex duality. Our framework, however, does not require such full convex-duality machinery; the simpler dual formulation in \eqref{eq:duality} suffices for our constructive algorithmic scheme.

The remainder of the paper is organized as follows. Section \ref{sec:preliminaries} provides the mathematical preliminaries used throughout the paper, with a particular focus on the fundamental properties of Markov processes. Section \ref{sec:algorithm} presents our main theoretical results.
We provide a characterization of the solution of \eqref{eq:value-function} in Section \ref{subsec:chara}.
In Section \ref{subsec:algorithm}, using this result, we develop some algorithms to solve the problems.
In Section \ref{sec:general}, we consider the more general setting in which the payoff function $g$ does not necessarily vanish at the cemetery state $\Delta$. Finally, Section \ref{sec:examples} applies the proposed algorithms to several concrete optimal stopping problems. Specifically, Sections \ref{subsec:Kou-infinite} and \ref{subsec:Kou-finite} treat the infinite- and finite-horizon optimal stopping problems for a Kou jump-diffusion process, respectively. Lastly, Section \ref{subsec:multi} addresses a multi-dimensional problem where $X$ is a two-dimensional Brownian motion and $g$ is a quadratic payoff function.

\section{Preliminaries}\label{sec:preliminaries}

In this section, we introduce the fundamental concepts and properties of Markov processes that will be used throughout the paper. 
Let $(H_t)_{t \ge 0}$ denote the Markov kernel (transition semigroup) associated with the transient process $X=\{X_t;\  t \ge 0\}$ introduced in Section \ref{sec:problem}. 
Recall that the process $X$ is assumed to be a strong Feller process. By definition, this means that for any $t > 0$, the operator $H_t$ maps $\mathcal{B}_b(D)$ into $\mathcal{C}_b(D)$, where $\mathcal{B}_b(D)$ and $\mathcal{C}_b(D)$ denote the spaces of bounded Borel measurable functions and bounded continuous functions on $D$, respectively.

Next, we recall the definition of excessive functions, which play a central role in the theory of optimal stopping. 
A non-negative measurable function $f$ on $D$ is called \textit{excessive} if it satisfies the following two conditions:
(i) $H_t f(x) \le f(x)$ for all $t \ge 0$ and $x \in D$;
(ii) $\lim_{t \downarrow 0} H_t f(x) = f(x)$ for all $x \in D$.
Under the strong Feller assumption, every excessive function is lower semi-continuous; see Exercise 2.16 in \cite{blumenthal2007markov}.

For a Borel set $A \subset D$, we introduce two stopping times: the first entry time $D_A$ and the first hitting time $T_A$. They are defined respectively as
\begin{align*}
    D_A := \inf \{ t \ge 0 : X_t \in A \}, \conn
    T_A := \inf \{ t > 0 : X_t \in A \},
\end{align*}
where we adopt the standard convention that $\inf \emptyset = \infty$. 
A point $x \in D$ is said to be \textit{regular} for the set $A$ if $\p_x(T_A = 0) = 1$. 
If the set $A$ satisfies the condition
\begin{align}\label{eq:condition-regular}
    \text{for every}\  x \in A, \ x \ \text{is regular for} \ A,
\end{align}
then the first hitting time $T_A$ and the first entry time $D_A$ coincide $\p_x$-almost surely for all $x \in D$.

For a stopping time $T$, we define the corresponding hitting operator $H_T$ acting on a measurable function $f$ by 
\[H_T f(x) := \E_x[f(X_T) \mathsf{1}_{\{T < \zeta\}}].\]
When $T = T_A$ for a Borel set $A$, we simply write $H_A$ for $H_{T_A}$.
We conclude this section by stating some useful properties of these operators.
Let $A$ and $B$ be Borel sets with  $A \subset B$ and assume that  $A$ satisfies \eqref{eq:condition-regular}.
Let $A^* :=A \cup A^r$, where $A^r:=\{x: x \ \text{is regular for} \ A\}$ .
Since $A \subset B$ and $A$ satisfies \eqref{eq:condition-regular}, we have $\p_x [T_B=0] \geq \p_x [T_A=0]=1 $ for any $x \in A^*$.
Hence, since $H_A$ is concentrated on  $A^*$ (Theorem 3.4.2 in \cite{chung-walsh}), it follows that 
\begin{align}\label{eq:HAHB}
    H_A = H_A H_B.
\end{align}
%This identity follows from the strong Markov property and the composition rule for hitting times,\(T_A=T_B+T_A\circ\theta_{T_B}\)where $\theta_t$ denotes the shift operator at time $t$ (see Proposition 3.4.1 in \cite{chung-walsh}).
Furthermore, if $f$ is an excessive function, then $H_A f$ is also an excessive function (see Proposition 3.4.4 in \cite{chung-walsh}).

\section{Main Result}\label{sec:algorithm}
First, we provide a characterization of the solution of \eqref{eq:value-function} in Section \ref{subsec:chara}.
Our main result is presented in Proposition \ref{prop:algprithm}.
In Section \ref{subsec:algorithm}, using this result, we develop some algorithms to solve the problems.

\subsection{A Characterization of Optimal Stopping with Duality}\label{subsec:chara}
To obtain our characterization, we introduce the duality gap (Definition \ref{def:duality-gap}). We first establish the following lemma.
\begin{lemma}\label{lemma:HAg-excessive}
    If non-empty $A$ is a subset of  $\Gamma$ satisfying \eqref{eq:condition-regular}, then
    $H_A g $ is excessive.
    In particular, it is lower semi-continuous.
\end{lemma}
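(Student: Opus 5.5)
The plan is to exhibit $H_A g$ as the image of an excessive function under the hitting operator $H_A$, and then to invoke Proposition 3.4.4 of \cite{chung-walsh}, as recalled in Section \ref{sec:preliminaries}. The natural excessive function is the value function $v$ itself, which is the least excessive majorant of $g$ (see \cite{Shiryaev_2008}). So the key identity to prove is
\begin{align*}
H_A g = H_A v .
\end{align*}
Once it holds, $H_A v$ is excessive because $v$ is, and therefore $H_A g$ is excessive. Lower semi-continuity then follows from the fact, recalled in Section \ref{sec:preliminaries}, that under the strong Feller assumption every excessive function is lower semi-continuous (Exercise 2.16 in \cite{blumenthal2007markov}).

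To prove the identity, I write
\[
H_A g(x) = \E_x\big[g(X_{T_A})\mathsf{1}_{\{T_A<\zeta\}}\big], \qquad H_A v(x) = \E_x\big[v(X_{T_A})\mathsf{1}_{\{T_A<\zeta\}}\big].
\]
It suffices to show that $X_{T_A}\in\Gamma$ almost surely on $\{T_A<\zeta\}$, since $v=g$ on $\Gamma$ by definition. By Theorem 3.4.2 in \cite{chung-walsh}, the harmonic measure $H_A(x,\cdot)$ is concentrated on $A^*=A\cup A^r$. So I must show that points of $A^r$ on which the measure may charge lie in $\Gamma$. It is enough to show that $\Gamma$ is closed, because $X_{T_A}$ lies in the closure $\bar A$ by right-continuity of paths, and $\bar A\subset\bar\Gamma=\Gamma$.

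The closedness of $\Gamma$ is the main obstacle, since $v$ is a priori only lower semi-continuous. My first attempt is a direct argument. We have $\Gamma=\{v-g\le 0\}$, with $v\ge g$ everywhere and $g$ continuous. Take $x_n\in\Gamma$ with $x_n\to x$. Lower semi-continuity of $v$ gives $v(x)\le\liminf v(x_n)=\liminf g(x_n)=g(x)$. Combined with $v\ge g$, this yields $v(x)=g(x)$, so $x\in\Gamma$. This direction of semi-continuity is exactly the one we need, so closedness should follow without further assumptions.

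With $\Gamma$ closed, $v(X_{T_A})=g(X_{T_A})$ on $\{T_A<\zeta\}$, which gives $H_A g=H_A v$. I also need $v$ to be excessive, not merely superharmonic. For this I would use the standard fact that the least excessive majorant is excessive, which is part of the result cited for \eqref{eq:duality}. Non-emptiness of $A$ and condition \eqref{eq:condition-regular} are not really needed beyond guaranteeing the setting of \eqref{eq:HAHB}. I will remark on that but not rely on it.
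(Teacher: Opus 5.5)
Your proof is correct and has the same skeleton as the paper's: establish $H_A g = H_A v$, apply Proposition 3.4.4 of \cite{chung-walsh}, and get lower semi-continuity from the strong Feller property. You differ only in how you justify $H_A g = H_A v$.

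The paper uses the composition rule \eqref{eq:HAHB} with $B=\Gamma$, writing $H_A g = H_A H_\Gamma g$. That step relies on \eqref{eq:condition-regular} and on $H_A(x,\cdot)$ being concentrated on $A^*$. It then needs the identification $H_\Gamma g = v$ via Theorem 3.3.3 in \cite{Shiryaev_2008}.

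You argue pathwise instead. You show $\Gamma=\{v-g\le 0\}$ is closed, using that $v$ is lower semi-continuous and $g$ is continuous. Then right-continuity gives $X_{T_A}\in\overline{A}\subset\Gamma$ on $\{T_A<\zeta\}$, so $v(X_{T_A})=g(X_{T_A})$.

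Your route is more self-contained. It shows the lemma holds for every Borel $A\subset\Gamma$ without \eqref{eq:condition-regular}, which confirms your closing remark. It also avoids identifying $H_\Gamma g$ with $v$: $H_\Gamma$ is built from the hitting time $T_\Gamma$, whereas the optimality result is naturally stated with the entry time $D_\Gamma$. As a bonus, it gives closedness of $\Gamma$.

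The paper's route stays inside the operator identity \eqref{eq:HAHB}, which it needs anyway, for instance for the monotonicity in Lemma \ref{lemma:increasing-pointwise}. That is where \eqref{eq:condition-regular} genuinely matters.
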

\begin{proof}
    Since, $A$ satisfies \eqref{eq:condition-regular}, it follows from \eqref{eq:HAHB} that $H_A g = H_A H_\Gamma g = H_A v$. 
Since the right-hand side is excessive by Proposition 3.4.4 in \cite{chung-walsh}, we conclude $H_A g$ is excessive.
In particular, it is lower semi-continuous under the strong Feller assumption (see Section \ref{sec:preliminaries}).
\end{proof}
By Lemma \ref{lemma:HAg-excessive}, for $A \subset \Gamma$ satisfying \eqref{eq:condition-regular},
\begin{align}
    f_A = H_A g + \sup_{D} (g-H_A g) \label{eq:f_A}
\end{align}
is excessive and $f_A \geq g$.
 We define the duality gap of a set $A \subset \Gamma$ with \eqref{eq:condition-regular}.
\begin{definition}\label{def:duality-gap}
    Assume that non-empty $A \subset \Gamma$ satisfies \eqref{eq:condition-regular}.
    Let $x_0 \in D$ be a fixed reference point.
    Let $\mu_A^{x_0}(\diff y) := \p_{x_0} [X_{D_A} \in \diff y]$. \textit{The duality gap} $D(A)$ of $A$ is defined as 
    \begin{align}\label{eq:duality-gap}
        D(A) := f_A(x_0)-\langle g,  \mu_A^{x_0} \rangle = \sup_D (g-H_A g),
    \end{align}
    where the second equality is obtained from \eqref{eq:f_A}. Moreover, we refer to
    \begin{align}
        e_A := g-H_A g 
    \end{align}
    as \textit{the error function} for $A$. Note that $D(A) = \sup_D e_A$.
\end{definition}
\begin{remark}
\begin{enumerate}
\item Note that \(D(A)\) is independent of the choice of the reference point \(x_0\). Henceforth, we do not explicitly write the reference point.
\item The quantity \(D(A)\) measures the maximum payoff loss from waiting until the process reaches \(A\), rather than stopping immediately, and thus indicates how far \(A\) is from the stopping region. This interpretation is confirmed by Lemma \ref{lemma:gap-zero}.

    \item Since $g=H_A g$ on $A$, $D(A)$ is always non-negative.
\end{enumerate}
\end{remark}

A key feature of our characterization is that, rather than directly characterizing the stopping region \(\Gamma\), it characterizes a set that is \(g\)-equivalent to \(\Gamma\), as defined below. 

\begin{definition}
   A set $A \subset D$ satisfying \eqref{eq:condition-regular} is said to be \textit{$g$-equivalent} to $\Gamma$ if $H_A g = H_\Gamma g=v$.
\end{definition}
The second equality $H_\Gamma g =v$ follows from Theorem 3.3.3 in \cite{Shiryaev_2008}.
The following lemma shows that solving the optimal stopping problem reduces to finding a set for which the duality gap is zero.

\begin{lemma}\label{lemma:gap-zero}
  Assume that a non-empty set $A \subset \Gamma$ satisfies the condition \eqref{eq:condition-regular}.
  If $D(A) = 0$, then we have
  \[v(x)= \langle g,  \mu_A^{x} \rangle = H_{A} g(x); \quad x \in D. \]
  Hence,  $A$ is $g$- equivalent to $\Gamma$.
\end{lemma}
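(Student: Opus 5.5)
The plan is a two-sided inequality for $v$ against $H_A g$, using only the least excessive majorant characterization in \eqref{eq:duality} and Lemma \ref{lemma:HAg-excessive}.

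\textbf{Upper bound.} Since $A\subset\Gamma$ is non-empty and satisfies \eqref{eq:condition-regular}, Lemma \ref{lemma:HAg-excessive} gives that $H_A g$ is excessive, and hence so is $f_A$ in \eqref{eq:f_A}. The hypothesis $D(A)=\sup_D e_A=0$ means $g\le H_A g$ on $D$, so $f_A=H_A g$. Thus $H_A g$ is itself an excessive majorant of $g$. Because $v$ is the least excessive majorant of $g$ (the first equality in \eqref{eq:duality}, from \cite{Shiryaev_2008}), we get $v\le H_A g$ pointwise on $D$.

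\textbf{Lower bound.} Under \eqref{eq:condition-regular}, $T_A=D_A$ $\p_x$-a.s., so
\[
H_A g(x)=\E_x\bigl[g(X_{D_A})\mathsf{1}_{\{D_A<\zeta\}}\bigr]=\E_x[g(X_{D_A})],
\]
where the last equality uses $g\in\mathcal{C}_0$ with the convention $g(\Delta)=0$. Since $D_A\in\mathcal{T}$, this is at most $v(x)$. This can also be read off from the dual side of \eqref{eq:duality}, since $\mu_A^x\in\mathcal{S}_x$ and $\langle g,\mu_A^x\rangle=H_A g(x)$. Combining the two bounds gives $v=H_A g=\langle g,\mu_A^x\rangle$ on $D$.

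\textbf{$g$-equivalence.} Since $H_\Gamma g=v$ by Theorem 3.3.3 in \cite{Shiryaev_2008}, we obtain $H_A g=H_\Gamma g=v$, which is the definition of $g$-equivalence.

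The only delicate points are bookkeeping:
\begin{itemize}
\item identifying $\langle g,\mu_A^x\rangle$ with $H_A g(x)$, which requires $T_A=D_A$ and the handling of the event $\{D_A\ge\zeta\}$ (where $g$ vanishes at $\Delta$);
\item checking that $H_A g$ meets the non-negativity and measurability requirements of excessiveness, which follows from $g\ge 0$ and Lemma \ref{lemma:HAg-excessive}.
\end{itemize}
I do not expect a real obstacle beyond these.
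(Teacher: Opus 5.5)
Your proof is correct and follows the paper's argument: the paper compresses your two bounds into the single chain $v(x)\le f_A(x)=\langle g,\mu_A^x\rangle\le\sup_{\mu\in\mathcal{S}_x}\langle g,\mu\rangle=v(x)$, using both sides of \eqref{eq:duality} exactly as you do. Your extra bookkeeping makes explicit what the paper leaves implicit, namely using $T_A=D_A$ a.s.\ and $g(\Delta)=0$ to identify $H_Ag(x)$ with $\langle g,\mu_A^x\rangle$, and invoking $H_\Gamma g=v$ for the $g$-equivalence.
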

\begin{proof}
Assume that $D(A)=0$. We have $f_A(x)=\langle g,  \mu_A^{x} \rangle$.
It follows from \eqref{eq:duality} that
\[ v(x)= \inf_{f\geq g, f \in \mathcal{R}} f(x)  \leq f_A(x)=\langle g,  \mu_A^{x} \rangle \leq \sup_{\mu \in \mathcal{S}_x} \langle g, \mu \rangle = v(x).\]
Therefore, we conclude $v(x)= \langle g,  \mu_A^{x} \rangle$.

\end{proof}

We present our main result. 
\begin{proposition}\label{prop:algprithm}
Let $\{A_n\}_n$ be an increasing sequence such that each $A_n$ satisfies \eqref{eq:condition-regular}. 
Let $A_\infty = \lim_{n \goes} A_n= \bigcup_n A_n$. Assume the following assumptions:
\begin{enumerate}[label=(\roman*)]
    \item $A_n \subset \Gamma$ for each $n$,
    \item $A_{n+1} \supset A_n \cup \arg\max  e_n $ for each $n$,
    \item The closure $\overline{A_\infty}$ satisfies \eqref{eq:condition-regular}.
\end{enumerate}
Then, we have $D(A_\infty) = 0$.
Hence, $v=H_{A_\infty} g$.
\end{proposition}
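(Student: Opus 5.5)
We argue by contradiction, using the monotonicity of $H_{A_n}g$ in $n$ and its continuity along the increasing family $A_n \uparrow A_\infty$. Write $e_n := e_{A_n} = g - H_{A_n} g$ and $e_\infty := g - H_{\overline{A_\infty}} g$. By (i), each $A_n \subset \Gamma$. Since $\Gamma$ is closed (because $v$ is lower semi-continuous and $g$ is continuous, $\{v\le g\}$ is closed), we also have $\overline{A_\infty} \subset \Gamma$. Condition (iii) then lets us apply Lemma \ref{lemma:HAg-excessive} and Lemma \ref{lemma:gap-zero} to $\overline{A_\infty}$.

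\textbf{Step 1 (monotonicity and limit).} Since $A_n \subset A_{n+1}$ and $A_n$ satisfies \eqref{eq:condition-regular}, \eqref{eq:HAHB} gives
\[ H_{A_n} g = H_{A_n} H_{A_{n+1}} g. \]
Both sets lie in $\Gamma$, and the proof of Lemma \ref{lemma:HAg-excessive} shows $H_{A_{n+1}} g = H_{A_{n+1}} v$ is excessive. Hence $H_{A_n} g \le H_{A_{n+1}} g$, because $H_T f \le f$ for excessive $f$. So $e_n$ decreases pointwise and $D(A_n)$ is nonincreasing. Next I would show
\[ \lim_n H_{A_n} g = H_{\overline{A_\infty}} g. \]
The hitting times satisfy $T_{A_n} \downarrow T' \ge T_{\overline{A_\infty}}$. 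On $\{T'<\infty\}$, either the decreasing sequence of hitting times is eventually constant, or it converges strictly from above. In either case $X_{T'} \in \overline{A_\infty}$ by right continuity, so $T' = T_{\overline{A_\infty}}$ up to the regularity issues handled by (iii). Bounded convergence with $g \in \mathcal{C}_0$ and right continuity of paths then gives the limit. If this identification is delicate, I would fall back on the one-sided bound $\lim_n H_{A_n} g \le H_{\overline{A_\infty}} g$, which still follows from monotonicity since $A_n \subset \overline{A_\infty}$. That bound is all Step 2 needs.

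\textbf{Step 2 (the argmax points accumulate in $\overline{A_\infty}$).} Suppose $D(\overline{A_\infty}) = \delta > 0$. Then $D(A_n) \ge \delta$ for every $n$, since $e_n \ge e_\infty$. Because $g \in \mathcal{C}_0$ and $H_{A_n} g \ge 0$, the set $\{e_n \ge \delta/2\}$ is contained in the compact set $\{g \ge \delta/2\}$. Upper semicontinuity of $e_n$ (from Lemma \ref{lemma:HAg-excessive}) therefore guarantees that $\arg\max e_n$ is nonempty. Pick $x_n \in \arg\max e_n$. By (ii), $x_n \in A_{n+1}$. Along a subsequence, $x_n \to x^\ast \in \overline{A_\infty}$. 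For $m \ge n+1$ we have $x_m \in A_m$, so $e_m(x_m) = 0$ and $D(A_m) = 0$, contradicting $D(A_m) \ge \delta$. Thus the contradiction is immediate, even without the limit in Step 1: the argmax point added at stage $n$ already has error zero for all later $A_m$.

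\textbf{Step 3 (where the real content lies).} The step above looks too easy, because $x_{m}$ is the argmax of $e_m$, not of $e_{m-1}$. The correct bookkeeping is this: $x_n \in A_{n+1}$ gives $e_{n+1}(x_n) = 0$, while $e_n(x_n) = D(A_n) \ge \delta$. Hence
\[ H_{A_{n+1}} g(x_n) - H_{A_n} g(x_n) \ge \delta. \]
Passing to the limit, $x_n \to x^\ast$, and the functions $h_n := H_{A_n} g$ increase to $h := H_{\overline{A_\infty}} g$, which is excessive and hence lower semi-continuous. I need to contradict $h_{n+1}(x_n) - h_n(x_n) \ge \delta$.

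\textbf{Main obstacle.} The difficulty is the lack of uniform convergence: monotone convergence of lower semi-continuous functions is only pointwise. My first attempt is a Dini-type argument on the compact set $K = \{g \ge \delta/2\}$. On $K$, each $e_n$ is upper semi-continuous and decreases to $e_\infty$. If $e_\infty$ were upper semi-continuous too, Dini would give
\[ \sup_K e_n \downarrow \sup_K e_\infty, \]
and it would remain to show $\sup e_\infty = 0$. That follows from $e_\infty(x^\ast) = 0$ (since $x^\ast \in \overline{A_\infty}$, which is regular by (iii)) combined with upper semicontinuity:
\[ \delta \le \limsup_n e_n(x_n) \le \ldots \]
The estimate I would aim for uses $e_n(x_n) \le e_k(x_n)$ for $k \le n$, and then upper semicontinuity of $e_k$ at $x^\ast$:
\[ \limsup_n e_n(x_n) \le e_k(x^\ast). \]
Letting $k \to \infty$ gives $\delta \le e_\infty(x^\ast) = g(x^\ast) - H_{\overline{A_\infty}} g(x^\ast) = 0$, using Step 1's limit together with regularity of $x^\ast$ for $\overline{A_\infty}$. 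This contradiction shows $D(\overline{A_\infty}) = 0$. The same argument applies to $A_\infty$, since $H_{A_\infty} g = H_{\overline{A_\infty}} g$ by Step 1. Finally, Lemma \ref{lemma:gap-zero} gives $v = H_{A_\infty} g$.

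The crux is therefore justifying $\lim_k e_k(x^\ast) = e_\infty(x^\ast) = 0$, that is, the convergence $H_{A_k} g(x^\ast) \to g(x^\ast)$ at a limit point of the added argmax points. This is exactly where (iii) and the limit identification in Step 1 are used.
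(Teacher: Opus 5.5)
Your final argument in Step 3 is, in substance, the paper's proof. The paper applies Dini's theorem to $f_n=\1_{\overline{A_\infty}}\,e_n$ on the one-point compactification, and uses (ii) to get $\sup_D f_n=D(A_n)$. You instead unroll Dini by hand at the maximizers $x_n\in A_{n+1}$. You use compactness of $\{g\ge\delta\}$, the monotonicity $e_n(x_n)\le e_k(x_n)$ for $k\le n$, and upper semicontinuity of $e_k$ to get $\delta\le e_k(x^\ast)$ for every $k$. That part is sound. It is also a bit leaner, since you need the pointwise limit only at the single point $x^\ast\in\overline{A_\infty}$. Step 2 is invalid, as you noticed yourself, and should simply be deleted.

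The gap is in how you close the crux $\lim_k e_k(x^\ast)=0$.

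\begin{itemize}
\item Your fallback bound $\lim_n H_{A_n}g\le H_{\overline{A_\infty}}g$ points the wrong way. It yields $\lim_k e_k(x^\ast)\ge e_\infty(x^\ast)=0$ (with your $e_\infty$), whereas Step 3 needs the lower bound $\lim_k H_{A_k}g(x^\ast)\ge g(x^\ast)$.
\item The identification in Step 1 is a non sequitur. The hitting time of a union is the infimum of the hitting times, so $T_{A_n}\downarrow T_{A_\infty}$ exactly, and the limit is $H_{A_\infty}g$ (this is Lemma \ref{lemma:increasing-pointwise}). Knowing $X_{T'}\in\overline{A_\infty}$ does not give $T_{A_\infty}=T_{\overline{A_\infty}}$, because a path may enter $\overline{A_\infty}\setminus A_\infty$ strictly before it enters $A_\infty$.
\end{itemize}

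What you actually need is $H_{A_\infty}g(x^\ast)=g(x^\ast)$, i.e.\ that $x^\ast$ is regular for $A_\infty$ itself, not merely for $\overline{A_\infty}$. This is precisely the form in which the paper invokes (iii) when it asserts $f_\infty(x)=0$ for $x\in\partial A_\infty$.

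Make that replacement, and take the contradiction hypothesis as $\lim_n D(A_n)=\delta>0$. You then conclude directly via $D(A_\infty)\le\lim_n D(A_n)$, which holds because $H_{A_n}g\le H_{A_\infty}g$. The detour through $H_{\overline{A_\infty}}g$ disappears, and your argument coincides with the paper's.
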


An important feature of Proposition \ref{prop:algprithm} is that it constructs a set $A_\infty$, which is \(g\)-equivalent to \(\Gamma\), rather than identifying the entire stopping region \(\Gamma\). As we will see in Corollary \ref{coro:naive-algorithm}, $A_\infty$ can even be countable. This reduced representation plays a crucial role in developing a practical algorithm, as it allows us to construct the value function through only countably many operations, without having to identify the entire stopping region.
We prepare  several lemmas to prove this proposition.

\begin{lemma}\label{lemma:max-in-Gamma}
    Assume that a non-empty set $A \subset \Gamma$ satisfies the condition \eqref{eq:condition-regular}.
   Then, $e_A=g-H_A g$ attains its maximum on $\Gamma$.
\end{lemma}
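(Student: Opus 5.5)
The plan is to compare $e_A$ on the continuation region $C$ with its values on $\Gamma$. For any $x\in C$ we will exhibit a point of $\Gamma$ where $e_A$ is at least as large. We also have to show that the supremum of $e_A$ is actually attained somewhere on $\Gamma$.

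\textbf{Step 1: the identity $e_A = v - H_A v$ on $\Gamma$, and a bound on $C$.}
By Lemma \ref{lemma:HAg-excessive} and \eqref{eq:HAHB} we have $H_A g = H_A v$. Since $v\ge g$, this gives
\[
e_A = g - H_A v \le v - H_A v \quad \text{everywhere},
\]
with equality on $\Gamma$.

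Now write $u := v - H_A v$ and take $x\in C$. Let $\tau := D_\Gamma$, which is optimal, so $v = H_\Gamma g = H_\Gamma v$. Since $A\subset\Gamma$ and $A$ satisfies \eqref{eq:condition-regular}, hitting $A$ happens after hitting $\Gamma$. The strong Markov property then gives $H_A v = H_\Gamma H_A v$. Combining,
\[
u(x) = H_\Gamma v(x) - H_\Gamma H_A v(x) = \E_x\bigl[u(X_{D_\Gamma})\mathsf{1}_{\{D_\Gamma<\zeta\}}\bigr].
\]
Because $u\ge 0$, this yields
\[
u(x) \le \sup_{\Gamma} u = \sup_{\Gamma} e_A .
\]
Hence $e_A(x) \le u(x) \le \sup_\Gamma e_A$ for $x\in C$, and therefore $\sup_D e_A = \sup_\Gamma e_A$.

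There is one subtlety here: the composition $H_\Gamma H_A = H_A$ needs $\Gamma$ regular enough that $D_\Gamma$ and $T_\Gamma$ agree. I would work with $D_\Gamma$ throughout and use $D_A\ge D_\Gamma$ directly, so that no regularity of $\Gamma$ is required.

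\textbf{Step 2: attainment of the supremum.} This is the main obstacle. The function $g\in\mathcal{C}_0$ is continuous and vanishes at $\Delta$. The function $H_A g$ is lower semi-continuous by Lemma \ref{lemma:HAg-excessive}. So $e_A$ is upper semi-continuous, and $e_A\le g$ because $H_A g\ge0$.

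If $\sup e_A = 0$, the supremum is attained on $A\subset\Gamma$, since $e_A=0$ there. Otherwise $\sup e_A = s > 0$, and the set $\{e_A\ge s/2\}$ lies inside the compact set $\{g\ge s/2\}$. An upper semi-continuous function attains its maximum on a compact set, so the maximum is attained at some point $x^*$.

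\textbf{Step 3: moving the maximizer into $\Gamma$.} If $x^*\in C$, Step 1 gives
\[
s = e_A(x^*) \le u(x^*) = \E_{x^*}\bigl[u(X_{D_\Gamma})\mathsf{1}_{\{D_\Gamma<\zeta\}}\bigr] \le s\,\p_{x^*}(D_\Gamma<\zeta).
\]
Therefore $u(X_{D_\Gamma}) = s$ almost surely on $\{D_\Gamma<\zeta\}$, and this event has positive probability since $s>0$. On $\Gamma$ we have $u = e_A$, so $e_A = s$ at some point of $\Gamma$. This point belongs to the closed set $\overline{\Gamma}$; note that $\Gamma = \{v=g\}$ is closed, because $v-g$ is lower semi-continuous. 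So the maximum of $e_A$ is attained on $\Gamma$.
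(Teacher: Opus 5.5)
Your argument is valid, but it proves a weaker conclusion than the one the paper establishes and later relies on. You show that \emph{some} maximizer of $e_A$ lies in $\Gamma$; in Step~3 you even entertain a maximizer $x^*\in C$ and then move to a different point. The paper's proof shows that \emph{every} maximizer lies in $\Gamma$, i.e.\ $\arg\max e_A\subset\Gamma$. That stronger form is what Corollaries~\ref{coro:naive-algorithm}, \ref{coro:finite-horizon-algorithm} and~\ref{coro:convex-algorithm} use. The update $A_{n+1}=A_n\cup\arg\max e_n$ adds all maximizers, so assumption~(i) of Proposition~\ref{prop:algprithm} would fail if even one of them lay in $C$.

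The missing step is already contained in your own inequalities. Your Step~1 identity $u(x)=\E_x\bigl[u(X_{D_\Gamma})\mathsf{1}_{\{D_\Gamma<\zeta\}}\bigr]$ holds for every $x\in D$, not only $x\in C$. Since $u\ge 0$, it gives $u\le\sup_\Gamma e_A$ on all of $D$. Hence for any maximizer $x^*$, with $s=\max_D e_A\ge 0$,
\[
s=e_A(x^*)\le u(x^*)\le \sup_\Gamma e_A\le s .
\]
This forces $e_A(x^*)=u(x^*)$, i.e.\ $g(x^*)-H_Av(x^*)=v(x^*)-H_Av(x^*)$, i.e.\ $v(x^*)=g(x^*)$, so $x^*\in\Gamma$. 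In particular, the case $x^*\in C$ in your Step~3 is vacuous. Locating the maximizer needs no split into $s=0$ and $s>0$; that split is only needed for attainment, as in your Step~2.

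With this addition your proof is essentially the paper's. The paper fixes an arbitrary maximizer $x_A$ and integrates $H_Ag\ge g-s$ on $\Gamma$ against $\p_{x_A}(X_{T_\Gamma}\in\cdot)$, using $H_Ag=H_\Gamma H_Ag$ and $v=H_\Gamma g$. It obtains $H_Ag(x_A)\ge v(x_A)-s$, which is exactly your $u(x_A)\le s$, and reads this directly as $v(x_A)\le g(x_A)$. Your repackaging via $u=v-H_Av$ has two minor advantages. Step~1 gives $\sup_D e_A=\sup_\Gamma e_A$ without assuming attainment. Working with $D_\Gamma$ also avoids the $T_\Gamma$ versus $D_\Gamma$ issue at points of $\Gamma$ that are irregular for $\Gamma$. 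Otherwise it is the same computation.
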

\begin{proof}
    Since $g$ vanishes at $\Delta$ and $H_A g \geq 0$,
    $e_A$ vanishes at $\Delta$.
    Moreover, $e_A$ is upper semi-continuous by Lemma \ref{lemma:HAg-excessive} and $e_A =0$ on $A$.
   It follows that \(e_A\) attains its maximum on \(D\).
    
    Let $x_A$ be a point where $e_A$ attains its maximum.
    %Assume to the contrary that $x_A \in D \setminus \Gamma = C$.
    %By this assumption and
    By the definition of $e_A$, we have, for each $x \in \Gamma$,
    \begin{align}
        g(x_A)-H_A g (x_A) \geq  g(x)-H_A g (x) \label{eq:xa-continuation-region}.
    \end{align}
    We have 
    \begin{align}
        H_A g(x_A) 
        & = \int H_A g(x) \p_{x_A}[X_{T_\Gamma} \in \diff x] \label{eq:HAg-representation} \\
        &\geq \int (g(x)-g(x_A)+H_A g(x_A)) \p_{x_A}[X_{T_\Gamma} \in \diff x] \nonumber \\
        &=\int g(x) \p_{x_A}[X_{T_\Gamma} \in \diff x] -\p_{x_A}[X_{T_\Gamma}\in D]\big(g(x_A)-H_A g(x_A)\big) \nonumber\\
        &\geq \int g(x) \p_{x_A}[X_{T_\Gamma} \in \diff x] -g(x_A)+H_A g(x_A) \nonumber\\
        & = H_{\Gamma} g(x_A) -g(x_A)+H_A g(x_A)\nonumber, 
    \end{align}
    where the first equality follows from $A \subset \Gamma$ and the strong Markov property, the first inequality follows from \eqref{eq:xa-continuation-region} and the second inequality follows from $D(A)= g(x_A)-H_A g(x_A) \geq 0$.
    \eqref{eq:HAg-representation} implies that  $v(x_A) = H_{\Gamma} g(x_A) \leq g(x_A)$, where we use Theorem 3.3.3 in \cite{Shiryaev_2008} to obtain the first equality.
    Therefore, $x_A \in \Gamma$.
\end{proof}

\begin{lemma}\label{lemma:increasing-pointwise}
    Let $\{A_n\}_n$ be a sequence satisfying the assumptions in Proposition \ref{prop:algprithm}.
    Then, $H_{A_n} g$ increasingly converges to $H_{A_\infty} g$ pointwise.
\end{lemma}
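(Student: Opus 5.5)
Monotonicity comes from \eqref{eq:HAHB}, and the limit comes from identifying the limit of the hitting times $T_{A_n}$ using the regularity of $\overline{A_\infty}$ from assumption (iii).

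\textbf{Monotonicity and the a priori bound.} Since $A_n\subset A_{n+1}$ and $A_n$ satisfies \eqref{eq:condition-regular}, \eqref{eq:HAHB} gives $H_{A_n}=H_{A_n}H_{A_{n+1}}$. Applied to $g$, this reads
\[
H_{A_n}g = H_{A_n}\bigl(H_{A_{n+1}}g\bigr).
\]
By Lemma \ref{lemma:HAg-excessive}, $u:=H_{A_{n+1}}g$ is excessive. For an excessive $u$ and any stopping time $T$, one has $H_T u\le u$. Hence $H_{A_n}g\le H_{A_{n+1}}g$, so the sequence is increasing.

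The same argument with $A_n\subset A_\infty$ gives $H_{A_n}g = H_{A_n}H_{A_\infty}g \le H_{A_\infty}g$. This step needs $H_{A_\infty}g$ to be excessive. I would obtain that from $H_{A_\infty}g=H_{A_\infty}v$, which is Proposition 3.4.4 of \cite{chung-walsh} applied to the excessive $v$. The identity $H_{A_\infty}g=H_{A_\infty}v$ holds because $v=g$ on $\overline{\Gamma}\supset\overline{A_\infty}$, using that $\Gamma$ is closed ($v$ is l.s.c.\ and $g$ continuous) and that $X_{T_{A_\infty}}\in \overline{A_\infty}$. So the limit $h:=\lim_n H_{A_n}g$ exists and satisfies $h\le H_{A_\infty}g$.

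\textbf{The reverse inequality.} Fix $x$. The hitting times $T_n:=T_{A_n}$ decrease to some stopping time $T\ge T_{A_\infty}$. Conversely, $T_{A_\infty}=\inf_n T_{A_n}$ because $A_\infty=\bigcup_n A_n$, so $T=T_{A_\infty}$. This part is elementary.

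The main obstacle is passing to the limit inside the expectation, $\E_x[g(X_{T_n});T_n<\zeta]\to \E_x[g(X_T);T<\zeta]$, since jumps may occur at $T$. I would split into two events.
\begin{enumerate}
\item On the event $\{T_n=T \text{ eventually}\}$, convergence is trivial.
\item On the event $\{T_n>T \text{ for all } n\}$, right-continuity gives $X_{T_n}\to X_T$. Then continuity of $g$ on $D\cup\{\Delta\}$ (with $g(\Delta)=0$), together with boundedness and dominated convergence, yields the limit.
\end{enumerate}

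The endpoint $X_T$ lies in $\overline{A_\infty}$, and here I would use assumption (iii). By the strong Markov property, $H_{A_\infty}g = H_{A_\infty}H_{\overline{A_\infty}}g$. Since $\overline{A_\infty}$ is regular for itself, landing in $\overline{A_\infty}\setminus A_\infty$ causes no loss: $g=H_{\overline{A_\infty}}g$ there. Thus the limit equals $H_{A_\infty}g(x)$.

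If right-continuity of the path at $T$ fails to suffice (for instance with $T_n\downarrow T$ but $X_{T}$ approached from a jump), I would instead compare $H_{A_\infty}g$ with $H_{\overline{A_\infty}}g$ directly. Both are excessive by the argument above, and each $A_n$ is eventually dense enough in $\overline{A_\infty}$ for the hitting distributions to converge weakly. The strong Feller property and continuity of $g$ would then give the convergence.
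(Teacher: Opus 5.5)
Your argument is correct and essentially the paper's: monotonicity comes from \eqref{eq:HAHB} together with excessivity of $H_{A_{n+1}}g$ (Lemma \ref{lemma:HAg-excessive}), and the limit comes from $T_{A_n}\downarrow T_{A_\infty}$, right-continuity of paths, continuity of $g$ and dominated convergence. The a priori bound, the appeal to assumption (iii) and the fallback paragraph are unnecessary: the dominated-convergence limit is by definition $H_{A_\infty}g(x)$, and because the times converge from the right, jumps at $T$ do not matter.
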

\begin{proof}
    For each $n$, since $H_{A_{n+1}} g$ is excessive by Lemma \ref{lemma:HAg-excessive}, we have
    \[H_{A_{n+1}} g \geq H_{A_n} H_{A_{n+1}} g = H_{A_n} g, \]
    where the second equality follows from \eqref{eq:HAHB}.
    Hence, $H_{A_n}g$ is increasing in $n$.
    Fix $x \in D$. 
    Since $A_n \uparrow A_\infty$, $D_{A_n} \downarrow D_{A_\infty}$ almost surely. 
    Moreover, by the dominated convergence theorem, the continuity of $g$ and  the right-continuity of the paths, it follows that
    \begin{align*}
        \lim_{n \goes} H_{A_n}g(x)
        &=\lim_{n \goes} \E_x[g(X_{D_{A_n}})] 
        = \E_x[\lim_{n \goes }g(X_{D_{A_n}})]\\
        &= \E_x[g(\lim_{n \goes } X_{D_{A_n}})] =
        \E_x[g( X_{D_{A_\infty}})] = H_{A_\infty} g(x).
    \end{align*}
\end{proof}
We give a proof of Proposition \ref{prop:algprithm}.
\begin{proof}[Proof of Proposition \ref{prop:algprithm}]
   The second statement follows from the first statement and Lemma \ref{lemma:gap-zero}.
   It remains to show the first statement.
   Recall that $D(A_n) = \sup_D (g- H_{A_n} g)$.
   Since  $\lim_{n \goes} D(A_n) \geq D(A_\infty) \geq 0$ by Lemma \ref{lemma:increasing-pointwise},
   it suffices to show that $\lim_{n \goes} D(A_n)=0.$

   We define, for each $n$,
   \begin{align}
       f_n = \1_{\overline{A_\infty}} (g-H_{A_n}g).
   \end{align}

   Since $\arg \max e_n \subset A_\infty$ for each $n$ by the assumption (ii), we have
   \[\lim_{n \goes} D(A_n) = \lim_{n \goes} \sup_D f_n. \]
   Hence, we show that the right-hand side is zero.
   
   It follows from Lemma \ref{lemma:increasing-pointwise} that $f_n \downarrow f_\infty$ pointwise, where 
   \[ f_\infty := \1_{\overline{A_\infty}} (g-H_{A_\infty}g),\]
   which is identically zero.
   Indeed, for $x \in D \setminus \partial A_\infty$, we clearly have $f_\infty(x)=0$. For $x \in  \partial A_\infty$, $f_\infty(x)=0$  follows from the assumption (iii).
   Hence, $f_n \downarrow 0$ pointwise. In particular, $f_n$ is non-negative.
   Since  $\overline{A_\infty}$ is closed,  $f_n$ is upper semi-continuous by Lemma \ref{lemma:HAg-excessive} .

   Since \(f_n\) and \(H_{A_n}g\) are non-negative, we have
\begin{align*}
0 \leq f_n(x)
&\leq \max\{g(x)-H_{A_n}g(x),0\} \leq \max\{g(x),0\}, \qquad x\in D.
\end{align*}
Since \(g\) vanishes at \(\Delta\), it follows that \(f_n\) also vanishes at \(\Delta\).
   Thus, \(f_n\) extends to an upper semi-continuous function on the one-point compactification \(D^*=D\cup\{\Delta\}\).
   Therefore, Dini's theorem yields \(f_n \downarrow 0\) uniformly, which implies that $\lim_{n \goes} \sup_D f_n=0$.
   \end{proof}

  \subsection{Algorithms}\label{subsec:algorithm}
  Using the result in Proposition \ref{prop:algprithm}, we develop algorithms for some problems.
  They include (i) Markov processes in which every singleton satisfies \eqref{eq:condition-regular} (covering both infinite- and finite-horizon problems); (ii) Infinite-horizon multi-dimensional diffusion processes with convex stopping regions. 
  
   When every singleton satisfies \eqref{eq:condition-regular}, we can use the following simple algorithm. This condition is satisfied by a broad class of one-dimensional stochastic processes, including jump-diffusion processes with a non-degenerate Brownian component, as well as Brownian motion itself. It can also hold in the absence of a Brownian component; for example, every one-dimensional \lev process of unbounded variation satisfies this condition.
   For a more detailed characterization for \lev process, see Theorem 6.5 of \cite{Kyprianou_2014}.
   
   \begin{corollary}\label{coro:naive-algorithm}
       Assume that every singleton satisfies \eqref{eq:condition-regular}. We recursively define
\[
A_{n+1}=A_n\cup\arg\max e_n,\qquad n\geq1,
\]
with \(A_1=\arg\max g\). Then,
\(A_\infty=\lim_{n\to\infty}A_n
\) satisfies \(D(A_\infty)=0\).
   \end{corollary}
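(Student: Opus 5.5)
The plan is to verify the three hypotheses (i)--(iii) of Proposition \ref{prop:algprithm} for the sequence $A_n$. Once they hold, the proposition gives $D(A_\infty)=0$ directly. First, I will check that everything is well defined. Since $g\in\mathcal{C}_0$ is nonnegative and vanishes at $\Delta$, $\arg\max g$ is non-empty whenever $g\not\equiv 0$. If $g\equiv0$, the statement is trivial: then $v\equiv 0$, $\Gamma=D$, and any set works. Each $A_n$ is a union of singletons, and a union of sets each satisfying \eqref{eq:condition-regular} again satisfies \eqref{eq:condition-regular}, since $T_{A}\le T_{\{x\}}$ for $x\in A$. So every $A_n$ satisfies \eqref{eq:condition-regular}. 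For $n\ge1$, the maximum of $e_n=e_{A_n}$ is attained: this is the first part of the proof of Lemma \ref{lemma:max-in-Gamma}, using upper semicontinuity of $e_n$ from Lemma \ref{lemma:HAg-excessive} and vanishing at $\Delta$. Hence $\arg\max e_n\neq\emptyset$. Hypothesis (ii) then holds by construction.

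Hypothesis (i) I will prove by induction on $n$. For the base case, $A_1=\arg\max g\subset\Gamma$, because $v\le \sup g$ and $v\ge g$, so $v(x)=g(x)$ wherever $g(x)=\max g$. For the inductive step, if $A_n\subset\Gamma$, then Lemma \ref{lemma:max-in-Gamma} says $e_n$ attains its maximum on $\Gamma$. I need slightly more, namely that \emph{every} maximizer lies in $\Gamma$, and the proof of Lemma \ref{lemma:max-in-Gamma} indeed shows this: it starts from an arbitrary maximizer $x_A$ and concludes $x_A\in\Gamma$. Thus $A_{n+1}\subset\Gamma$.

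The main obstacle is hypothesis (iii): the closure $\overline{A_\infty}$ must satisfy \eqref{eq:condition-regular}. Since every singleton satisfies \eqref{eq:condition-regular}, every point $x$ is regular for $\{x\}$. For $x\in\overline{A_\infty}$, we have $\{x\}\subset\overline{A_\infty}$, hence $T_{\overline{A_\infty}}\le T_{\{x\}}$, and so $\p_x(T_{\overline{A_\infty}}=0)\ge \p_x(T_{\{x\}}=0)=1$. So (iii) holds automatically under the singleton assumption. This is exactly why the singleton condition is imposed; the same monotonicity argument applies to any set. I expect the only delicate point to be the bookkeeping in (i), namely that all maximizers, not just one, lie in $\Gamma$. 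With (i)--(iii) established, Proposition \ref{prop:algprithm} yields $D(A_\infty)=0$, and hence also $v=H_{A_\infty}g$.
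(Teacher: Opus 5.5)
Your proof is correct and follows the paper's route. Like the paper, you verify hypotheses (i)--(iii) of Proposition \ref{prop:algprithm}, getting (i) from Lemma \ref{lemma:max-in-Gamma} and (iii) from the monotonicity $T_A\le T_{\{x\}}$, which makes every set satisfy \eqref{eq:condition-regular}. You also make explicit two points the paper leaves implicit: the base case $A_1=\arg\max g\subset\Gamma$ (via $g\le v\le\sup g$), and the fact that the proof of Lemma \ref{lemma:max-in-Gamma} places \emph{every} maximizer of $e_n$ in $\Gamma$; one small fix is that the attainment of $\max e_n$, which you state up front, already needs $A_n\subset\Gamma$ for Lemma \ref{lemma:HAg-excessive}, so it belongs inside your induction.
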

   
\begin{proof}
It follows from Lemma \ref{lemma:max-in-Gamma} that Assumption (i) in Proposition \ref{prop:algprithm} is satisfied. Assumption (ii) is immediate from the construction. It remains to verify that Assumption (iii) is satisfied.
    Let $A \subset \Gamma$ be an arbitrary set. For every point $x \in A$, we have $T_A \leq T_{\{x\}}$ almost surely. By the assumption, $\p_x[T_{\{x\}}=0]=1$. Thus, $\p_x[T_{A}=0]=1$ as well.
    Hence, an arbitrary set satisfies \eqref{eq:condition-regular}.
    Therefore,     
    the assumption (iii) in Proposition \ref{prop:algprithm} is satisfied. 
\end{proof}

We consider a finite-horizon optimal stopping problem:

$$
v(t,x)=\sup_{\tau\in[t,T]}\mathbb{E}_{t,x}[g(X_\tau)],
\qquad t\in[0,T].
$$

By defining the Markov process \(Y_t=(t,X_t)\), we can reformulate this problem as an optimal stopping problem for $Y$:

$$
v(t,x)=\sup_{\tau}\E_y[h(Y_\tau)],
$$
where
$$
h(y)=h(t,x)=g(x)\mathbf{1}_{[0,T]}(t).
$$
In the following corollary, the error function \(e_n\) and the duality gap \(D\) are defined with respect to the process \(Y\), rather than \(X\).

\begin{corollary}\label{coro:finite-horizon-algorithm}
Assume that every singleton satisfies \eqref{eq:condition-regular} with respect to the process $X$.
Define
\[     A_1
        := \{(t,x): t\in[0,T],\,x\in\Gamma_\infty\}
        \cup \{t\geq T\},
   \]
where $\Gamma_\infty$ denotes the stopping region of the infinite-horizon problem.
Recursively define
\begin{align*}
A_{n+1} &:= A_n\cup\gamma_n,\\
\gamma_n
&:= \bigcup_{(t_n,x_n)\in\arg\max e_n}
\{(t,x_n):t\geq t_n\}, \quad n \geq 1.
\end{align*}
Then, the limiting set
\(        A_\infty:=\lim_{n\to\infty}A_n
   \)
satisfies
\(        D(A_\infty)=0.
   \)
\end{corollary}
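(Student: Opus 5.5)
We verify the three assumptions of Proposition \ref{prop:algprithm} for the process $Y_t=(t,X_t)$ and the reward $h$. Once this is done, the conclusion $D(A_\infty)=0$ follows directly. We have to handle two technical points. The reward $h$ is discontinuous at $t=T$. Moreover, $h$ does not vanish as $t\to\infty$ except through the indicator, so we read ``vanishing at $\Delta$'' as the fact that $Y$ is killed, or $h=0$, once $t>T$. Including $\{t\ge T\}$ in $A_1$ is exactly what repairs both points: on $A_1$ every path is stopped by time $T$, so $H_{A_n}h$ only involves values of $h$ on $[0,T]\times D$. Also $e_n=h-H_{A_n}h$ vanishes for $t\ge T$, and the relevant upper semicontinuity and Dini arguments are carried out on $[0,T]\times D$ together with the cemetery.

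\textbf{Assumption (i).} We first show $A_1\subset\Gamma^Y$. For $t\ge T$ this is trivial: $h=0$ for $t>T$, and at $t=T$ stopping immediately is forced. For $x\in\Gamma_\infty$ and $t\le T$ we use $v(t,x)\le v_\infty(x)=g(x)=h(t,x)$, since restricting to $[t,T]$ only shrinks the admissible set of stopping times. Hence $(t,x)\in\Gamma^Y$.

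Next, the points $(t_n,x_n)$ lie in $\Gamma^Y$ by Lemma \ref{lemma:max-in-Gamma} applied to $Y$. The inductive hypothesis $A_n\subset\Gamma^Y$ and condition \eqref{eq:condition-regular} for $A_n$ are what that lemma needs. It remains to pass from $(t_n,x_n)$ to the ray $\{(t,x_n):t\ge t_n\}$. We use monotonicity of the finite-horizon stopping region in time: $v(t,x)-g(x)$ is nonincreasing in $t$, because a shorter remaining horizon gives a smaller supremum (time-homogeneity of $X$). So $(t_n,x_n)\in\Gamma^Y$ implies $(t,x_n)\in\Gamma^Y$ for all $t\in[t_n,T]$, and the part with $t\ge T$ is already in $A_1$. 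Thus $A_{n+1}\subset\Gamma^Y$.

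\textbf{Assumption (ii)} holds by construction, since $\gamma_n\supset\arg\max e_n$.

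\textbf{Regularity and assumption (iii).} This is the main obstacle. We must check \eqref{eq:condition-regular} for each $A_n$ and for $\overline{A_\infty}$ with respect to $Y$.

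Take a point $(t,x)$ of a horizontal ray $\{(s,x):s\ge t_0\}$ with $t\ge t_0$. Under $\p_{(t,x)}$ we have $Y_s=(t+s,X_s)$. By regularity of $\{x\}$ for $X$, $X$ returns to $x$ at times $s_k\downarrow0$, and $(t+s_k,x)$ lies in the ray. Hence $T=0$ almost surely, including at the left endpoint $t=t_0$, since $t+s_k>t_0$ there.

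For the region $[0,T]\times\Gamma_\infty$, the same argument works using regularity of $\Gamma_\infty$ for $X$, which holds by the proof of Corollary \ref{coro:naive-algorithm}. The region $\{t\ge T\}$ is entered immediately from any of its points, since time only increases.

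Unions of sets satisfying \eqref{eq:condition-regular} again satisfy it, which settles the $A_n$. For $\overline{A_\infty}$, every point of the closure has the form $(t,x)$ with $(t',x')\to(t,x)$ from the union. We argue that the closure is again a union of forward rays $\{(s,x):s\ge t^*\}$, where $t^*$ is the infimum of the start times over rays based at $x$. Limits of points $x_n$ also yield forward rays after taking closures, because the ray lengths are all $\ge T-t$. Given this ray structure, each point of $\overline{A_\infty}$ lies on a forward horizontal ray contained in $\overline{A_\infty}$, and the singleton-regularity argument above applies. This ray-closure step is the part I expect to need the most care. It uses that every added set is upward closed in $t$, so the closure stays upward closed in $t$ within each fibre $\{x\}$.

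Once (i)–(iii) are verified, Proposition \ref{prop:algprithm} gives $D(A_\infty)=0$.
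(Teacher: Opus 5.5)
Your proposal is correct and takes the paper's route. You use monotonicity of $t\mapsto v(t,x)-g(x)$ to carry $(t_n,x_n)\in\Gamma$ (from Lemma \ref{lemma:max-in-Gamma}) along its forward ray, and you get regularity at each point from the forward ray $\{(s,x):s\ge t\}$ together with regularity of $\{x\}$ for $X$. You are also slightly more complete than the paper, which checks neither $A_1\subset\Gamma$ nor $\overline{A_\infty}$ (it only treats $A_\infty$); for your ray-closure step, the clean argument is that if $(t_k,x_k)\in A_\infty$ converge to $(t,x)$, then $(t_k+s-t,x_k)\in A_\infty$ converge to $(s,x)$ for every $s\ge t$.
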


\begin{proof}
Let $\Gamma \subset [0,T]\times D$ be the stopping region, and define
\[  \Gamma(t):=\{x\in D:(t,x)\in\Gamma\}.
   \]
First, we prove that \(   \Gamma(t_n)\subset\Gamma(t)\)
for any $t \in [t_n,T]$ and $n \geq 0$.
 Since
$\{\tau:\tau\in[t,T]\}\subseteq\{\tau:\tau\in[t_n,T]\},$
we have 
$
v(t_n,x)\geq v(t,x).
$
Now, if $x\in\Gamma(t_n)$, then
$v(t_n,x)=g(x).$
Since $v(t,x)\geq g(x)$, it follows that
$$
g(x)=v(t_n,x)\geq v(t,x)\geq g(x),
$$
and hence $v(t,x)=g(x)$. Therefore, $x\in\Gamma(t)$, proving
$$
\Gamma(t_n)\subseteq\Gamma(t), \qquad t \in [t_n, T].
$$
Since $(t_n,x_n)\in\Gamma$ by Lemma \ref{lemma:max-in-Gamma}, we have
$\gamma_n\subset\Gamma$. Hence, assumption (i) in Proposition \ref{prop:algprithm} is satisfied.

It remains to verify that each $A_n$ and $A_\infty$ satisfy
\eqref{eq:condition-regular}.
Let $(t,x)\in A_n$. By construction,
\(   (s,x)\in A_n\) for all \(s\geq t\).
In particular, $B:= \{ (x,s): s \geq t \} \subset A_n \cap (\R \times [t,\infty)) $.
Since the singleton $\{x\}$ satisfies \eqref{eq:condition-regular}, \[ \p_{(t,x)}[Y_{T_{A_n}}=0] \geq \p_{(t,x)}[Y_{T_B}=0] = \p_{t,x}[X_{T_{\{x\}}}=0] =1. \]
Hence, each $A_n$ satisfies \eqref{eq:condition-regular}.
The same argument applies to $A_\infty$.
\end{proof}

In multidimensional Markov processes, singleton sets generally do not satisfy \eqref{eq:condition-regular} (e.g., a singleton is polar for a $d$-dimensional Brownian motion when $d \ge 2$). Therefore, Corollary \ref{coro:naive-algorithm} cannot be directly applied in the multidimensional setting.
Nevertheless, an algorithm can still be constructed if additional structural conditions are present. For instance, suppose that $X$ is a multidimensional diffusion process, $\Gamma$ is a convex set, and there exists a set $A_1 \subset \Gamma$ satisfying \eqref{eq:condition-regular}. In this case, one can construct an algorithm of the following form.

\begin{corollary}\label{coro:convex-algorithm}
  Assume that \(X\) is a transient diffusion process in a domain \(D\subset\mathbb{R}^d\), where \(d\geq2\), satisfying Assumption A in \cite{pinsky1995}. Assume that \(\Gamma\) is convex and \(A_1\subset\Gamma\). Define recursively
\[
A_{n+1}=\operatorname{conv}\bigl(A_n\cup\arg\max e_n\bigr),\qquad n\geq1,
\]
and $A_\infty = \lim_{n\goes} A_n$.
Then, $D(A_\infty) = 0$.
\end{corollary}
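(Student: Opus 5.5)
The plan is to verify the three hypotheses of Proposition \ref{prop:algprithm} for the sequence \(A_n\) defined by convex hulls. That proposition then gives \(D(A_\infty)=0\) directly. Implicitly I assume \(A_1\) is non-empty, satisfies \eqref{eq:condition-regular}, and has non-empty interior (or at least is regular in the sense required), as in the sentence preceding the corollary.

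First, assumption (i), that \(A_n\subset\Gamma\), goes by induction on \(n\). Suppose \(A_n\subset\Gamma\) and \(A_n\) satisfies \eqref{eq:condition-regular}. Lemma \ref{lemma:max-in-Gamma} says \(e_n=g-H_{A_n}g\) attains its maximum on \(\Gamma\), so \(\arg\max e_n\subset\Gamma\). (Here I would make sure that the maximizers used are taken in \(\Gamma\), or else that all maximizers lie in \(\Gamma\); the proof of that lemma shows every maximizer \(x_A\) satisfies \(v(x_A)\le g(x_A)\), so all of them do.) Since \(\Gamma\) is convex, \(A_{n+1}=\operatorname{conv}(A_n\cup\arg\max e_n)\subset\Gamma\). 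Assumption (ii) holds by construction, since \(A_{n+1}\supset A_n\cup\arg\max e_n\). The sequence is increasing, and \(A_\infty=\bigcup_nA_n\) is convex as an increasing union of convex sets.

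The main obstacle is the regularity condition \eqref{eq:condition-regular}, which is needed both for each \(A_n\) (to run the induction) and for \(\overline{A_\infty}\) (assumption (iii)). The key point is that every set in question is a convex set containing \(A_1\), so it has non-empty interior. For a closed convex set \(K\) with non-empty interior in \(\mathbb{R}^d\), each boundary point satisfies an exterior-type cone condition from the inside. Concretely, for \(x\in\partial K\) and an interior ball \(B\subset K\), the convex hull \(\operatorname{conv}(\{x\}\cup B)\subset K\) contains a truncated open cone with vertex \(x\). Under Assumption A in \cite{pinsky1995} (uniform ellipticity and locally bounded coefficients), the standard cone criterion (Zaremba-type / Blumenthal \(0\)-\(1\) argument) shows that a point which is the vertex of a cone contained in \(K\) is regular for \(K\). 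Interior points are trivially regular by path continuity. Hence every closed convex set with non-empty interior satisfies \eqref{eq:condition-regular}, which settles \(\overline{A_\infty}\).

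For non-closed \(A_n\), I would either replace \(A_n\) by its closure \(\overline{A_n}\subset\Gamma\) (\(\Gamma\) is closed, since \(v\) is lower semi-continuous and \(g\) is continuous), or note that the same cone argument works. The cone \(\operatorname{conv}(\{x\}\cup B)\setminus\{x\}\) lies in the convex set \(A_n\) itself whenever \(x\in A_n\) and \(B\subset A_n^\circ\), because the segment from a point of a convex set to an interior point lies in the interior except at the endpoint. So every \(x\in A_n\) is regular for \(A_n\). With (i)–(iii) verified, Proposition \ref{prop:algprithm} yields \(D(A_\infty)=0\) and \(v=H_{A_\infty}g\).
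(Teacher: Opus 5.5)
Your proposal is correct and follows essentially the same route as the paper: (i) by induction from Lemma \ref{lemma:max-in-Gamma} and the convexity of $\Gamma$, (ii) by construction, and \eqref{eq:condition-regular} for each $A_n$ and for $\overline{A_\infty}$ from the observation that a convex set $S$ with non-empty interior contains the cone $\bigcup_{z\in B(y,\varepsilon)}[a,z]$ at each point $a\in S\cap\partial S$, so that Pinsky's cone criterion (Theorem 2.3.3(iii)) applies. The hypothesis you make explicit, namely that $A_1$ is non-empty, satisfies \eqref{eq:condition-regular} and has non-empty interior, is exactly what the paper's proof uses implicitly when it treats the $A_n$ and $\overline{A_\infty}$ as convex sets with non-empty interior.
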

\begin{proof}
   Since $\Gamma$ is convex and $A_1\cup\arg\max e_1\subset\Gamma$ by Lemma \ref{lemma:max-in-Gamma}, we have
$A_2=\operatorname{conv}\bigl(A_1\cup\arg\max e_1\bigr)\subset\Gamma.$
The same argument shows that $A_n\subset\Gamma$ for every $n$, and hence assumption (i) in Proposition \ref{prop:algprithm} is satisfied. Assumption (ii) is immediate.
   It remains to show that $A_n$ for $n \geq 2$ and $\overline{A_\infty}$ satisfy \eqref{eq:condition-regular}.
   It suffices to show that any convex set $S$ with nonempty interior satisfies \eqref{eq:condition-regular} since $A_n$ for $n \geq 2$ and $\overline{A_\infty}$ are convex. 
   Let $a\in S$. If $a\in\operatorname{int} S$, then it is clear that $a$ is regular for $S$. Assume that $a\in\partial S$. If $\mathbb{R}^d\setminus S$ satisfies an exterior cone condition at $a$, then $a$ is regular for $S$ by Theorem 2.3.3 (iii) in \cite{pinsky1995}.
Since $\operatorname{int} S$ is nonempty, we can take $y\in\operatorname{int} S$ and $\eps>0$ such that the ball $B(y,\eps)$ centered at $y$ is contained in $\operatorname{int} S$. Since $S$ is convex,

$$
S_1=\bigcup_{z\in B(y,\eps)}[a,z]\subset S,
$$

where $[a,z]$ denotes the line segment joining $a$ and $z$. Since $S_1$ is a cone with vertex $a$, it follows that $\R^d\setminus S$ satisfies an exterior cone condition at $a$.

\end{proof}

It should be noted that the algorithms developed in this section are by no means exhaustive. In particular, if additional qualitative properties of the  stopping region are known a priori, one can devise a more tailored and efficient algorithm based on those structural insights. For instance, while Corollary \ref{coro:naive-algorithm} presents the most naive algorithm, if the stopping region is known in advance to be one-sided, e.g., $\Gamma = (-\infty, a]$ for some $a$, a more efficient scheme can be constructed by setting $A_{n+1} = A_n \cup (-\infty, x_n)$, where $x_n = \max \{ \operatorname{argmax} e_n \}$. Thus, investigating the qualitative properties of the optimal stopping problem at hand is crucial for applying our methodology, as a deeper structural understanding enables the design of more effective algorithms. This highlights the flexibility of our approach, and developing such refined, problem-specific algorithms remains an important direction for future research.

\section{General Case}\label{sec:general}
So far, we have assumed that \(g\) vanishes at \(\Delta\). 
Indeed, this assumption ensures that \(e_A\) attains its maximum on \(D\), which is essential for our algorithm.
However, for a general continuous, nonnegative \(g\), there exists a sequence \(\{g_n\}\) vanishing at \(\Delta\) such that the corresponding value functions, continuation regions, and stopping regions approximate those for \(g\) in the following sense.

\begin{proposition}
Let $g$ be a non-negative continuous function on $D$.
Let $(g_n )_n \subset \mathcal{C}_0$ be a sequence with the following conditions:
\begin{enumerate}[label=(\roman*)]
    \item $g_n \uparrow g$.
    \item For each $x$, there exists $N$ such that $g_n(x)=g(x)$ if $n \geq N$.
\end{enumerate}
Let $v_n$, $C_n$ and $\Gamma_n$ be the value function, continuation region and stopping region corresponding to  $g_n$.
Then, $v_n \uparrow v$ pointwise, and furthermore, 
\[
C = \limsup_{n \to \infty} C_n = \liminf_{n \to \infty} C_n \conn \Gamma = \limsup_{n \to \infty} \Gamma_n = \liminf_{n \to \infty} \Gamma_n. 
\]
\end{proposition}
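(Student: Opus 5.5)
The plan is to prove the convergence of the value functions first, and then derive the set identities pointwise from it using assumption (ii). Throughout we work directly with the definition \eqref{eq:value-function}. We allow $v$ to take the value $+\infty$, since $g$ need no longer be bounded. In that case $v(x)>g(x)$ and $x \in C$ trivially.

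\emph{Step 1: $v_n \uparrow v$.} Since $0 \le g_n \le g_{n+1} \le g$, we have $\E_x[g_n(X_\tau)] \le \E_x[g_{n+1}(X_\tau)] \le \E_x[g(X_\tau)]$ for every $\tau \in \mathcal{T}$. Taking suprema gives $v_n \le v_{n+1} \le v$, so $\lim_n v_n \le v$. For the reverse inequality, fix $x$ and $\tau$. By (i) and the monotone convergence theorem, $\E_x[g_n(X_\tau)] \uparrow \E_x[g(X_\tau)]$. Hence
\[\lim_n v_n(x) \ge \lim_n \E_x[g_n(X_\tau)] = \E_x[g(X_\tau)].\]
Taking the supremum over $\tau$ yields $\lim_n v_n(x) \ge v(x)$. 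The only delicate point here is exchanging the limit with the supremum over stopping times. We avoid any uniformity issue by bounding $\lim_n v_n$ from below one stopping time at a time.

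\emph{Step 2: $\Gamma \subset \liminf_n \Gamma_n$.} Let $x \in \Gamma$ and let $N$ be as in (ii). For $n \ge N$ we have
\[g_n(x) \le v_n(x) \le v(x) = g(x) = g_n(x),\]
so $v_n(x) = g_n(x)$ and $x \in \Gamma_n$ for all $n \ge N$.

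\emph{Step 3: $C \subset \liminf_n C_n$.} Let $x \in C$, so $v(x) > g(x)$, possibly with $v(x)=\infty$. By Step 1 there is $M$ such that $v_n(x) > g(x)$ for $n \ge M$. For $n \ge \max(M,N)$, assumption (ii) gives $g_n(x) = g(x) < v_n(x)$, hence $x \in C_n$.

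\emph{Step 4: conclusion.} Since $D = C_n \sqcup \Gamma_n$ for every $n$ and $D = C \sqcup \Gamma$, complementation gives
\[\limsup_n \Gamma_n = D \setminus \liminf_n C_n \subset D \setminus C = \Gamma.\]
Combining with Step 2,
\[\Gamma \subset \liminf_n \Gamma_n \subset \limsup_n \Gamma_n \subset \Gamma,\]
so all three sets coincide. The identities for $C$ follow in the same way, with the roles of $C$ and $\Gamma$ exchanged and using Step 3.

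No results beyond the definitions are needed. In particular, no continuity of $v$ is required, because assumption (ii) makes every argument purely pointwise.
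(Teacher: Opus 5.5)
Your proof is correct. Steps 2--4 coincide with the paper's argument: the paper also proves $\Gamma\subset\liminf_n\Gamma_n$ and $C\subset\liminf_n C_n$, and then takes complements.

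The genuine difference is in Step 1. The paper proves $\hat v:=\lim_n v_n\ge v$ potential-theoretically:
\begin{itemize}
\item each $v_n$ is excessive, so the increasing limit $\hat v$ is excessive;
\item letting $n\to\infty$ in $v_n\ge g_n$ gives $\hat v\ge g$;
\item since $v$ is the least excessive majorant of $g$, it follows that $\hat v\ge v$.
\end{itemize}
You instead fix $\tau$, apply monotone convergence to $\E_x[g_n(X_\tau)]$, and only then take the supremum over $\tau$. In effect you interchange $\sup_\tau$ with an increasing limit directly.

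Your route is more elementary and self-contained. It uses only the definition \eqref{eq:value-function}. It needs neither the least-excessive-majorant theorem for the possibly unbounded $g$ nor the stability of excessive functions under increasing limits. It also handles $v(x)=+\infty$ transparently. The paper's route, by contrast, stays inside the excessive-function/dual framework of \eqref{eq:duality} that drives the rest of the paper.

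One small remark: in Step 3 you invoke (ii), but it is not needed there. The inequality $g_n\le g$ already gives $g_n(x)\le g(x)<v_n(x)$ for all $n\ge M$, which is how the paper argues. Assumption (ii) is genuinely used only in Step 2.
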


\begin{remark}
     Indeed, a sequence $(g_n)$ satisfying (i) and (ii) can be constructed as follows. Let $K_0 \subset D$ be a compact set, and for each $n \ge 1$, define
     $$ K_n = \{ x \in D : \operatorname{dist}(x, K_{n-1}) \le 1 \}, $$
     where $\operatorname{dist}(\cdot, \cdot)$ is the metric on $D$. For $n \ge 1$, define $g_n : D \to \mathbb{R}$ by
     $$ g_n(x) =  
     \begin{cases}  g(x) & \text{if } x \in K_{n-1}, \\ 
     (1 - \operatorname{dist}(x, K_{n-1})) g(x) & \text{if } x \in K_n \setminus K_{n-1}, \\  
     0 & \text{otherwise.}  
     \end{cases}$$
     Then, $(g_n)$ satisfies the required properties (i) and (ii).
\end{remark}

\begin{proof}
First, we show $v_n \uparrow v$.
Since $g_n \le g_{n+1}$, it is clear that $v_n \le v_{n+1}$. Thus, $v_n$ is increasing with respect to $n$.
Let $\hat{v} = \lim_{n \to \infty} v_n$.
We show that $\hat{v}=v$.
Since $v_n \leq v$ for each $n$, $\hat{v} \leq v$ is obtained by letting $n \to \infty$.
We show  $\hat{v} \geq v$.
Since $v_n$ is excessive for each $n$, its increasing limit $\hat{v}$ is also excessive.
Therefore, it suffices to show $\hat{v} \ge g$ since $v$ is the least excessive majorant of $g$.
Taking $n \to \infty$ in $v_n \ge g_n$, we obtain $\hat{v} \ge g$.

Next, we show $C = \limsup_{n \to \infty} C_n = \liminf_{n \to \infty} C_n $.
For this, it suffices to show:
\[
\limsup_{n \to \infty} C_n \subset C \quad \text{and} \quad C \subset \liminf_{n \to \infty} C_n.
\]
First, we show $\limsup_{n \to \infty} C_n \subset C$.
Let $x \in \Gamma$, which means $g(x) = v(x)$.
Choose $N$ such that $g_n(x)=g(x)$ if $n \geq N$.
For any $n \ge N$, $g_n(x) = g(x)$.
On the other hand, since $g_n \le g$ and $v_n \le v$, we have:
\[
g_n(x) = g(x) = v(x) \ge v_n(x) \ge g_n(x).
\]
Hence, $v_n(x) = g_n(x)$ for all $n \ge N$.
Therefore, $x \in \Gamma_n$ for $n \ge N$, which implies $x \in \bigcap_{n \ge N} \Gamma_n \subset \liminf_{n \to \infty}\Gamma_n$.
Taking the complement, since $\Gamma_n^c = C_n$, we obtain:
\[
\limsup_{n \to \infty} C_n \subset C.
\]

Next, we show $C \subset \liminf_{n \to \infty} C_n$.
Let $x \in C$.
Since $g(x) < v(x)$ and $v_n(x) \uparrow v(x)$, there exists $N$ such that:
\[
g(x) < v_n(x),\quad n \ge N.
\]
Since $g_n \le g$, we have $g_n(x) \le g(x) < v_n(x)$ for all $n \ge N$.
Thus, $g_n(x) < v_n(x)$ holds for all $n \ge N$, which means $x \in C_n$ for all $n \ge N$.
Therefore,
\[
x \in \liminf_{n \to \infty} C_n.
\]
Hence, $C \subset \liminf_{n \to \infty} C_n$.
Finally,  the relation $\Gamma = \limsup_{n \to \infty} \Gamma_n = \liminf_{n \to \infty} \Gamma_n$ follows from 
these results, together with  $\Gamma = D \setminus C$ and $\Gamma_n = D \setminus C_n$ for each $n$.
\end{proof}

\section{Examples}\label{sec:examples}
In this section, we mainly focus on problems that pose substantial challenges for existing methods in the literature. As noted earlier, even in infinite-horizon settings, one-dimensional \lev processes with two-sided jumps are generally manageable under conventional approaches only when the stopping region is known a priori to have a simple one-sided structure.
Similarly, multi-dimensional diffusions also pose substantial challenges for standard methods.
A primary reason for this limitation is that, unlike in the case of \textit{one-sided} jump processes or \textit{one-dimensional} diffusions, obtaining an explicit representation of the harmonic function, commonly referred to as the scale function, is notoriously difficult for these processes.  
The finite-horizon case presents even greater difficulty. In contrast, our algorithm does not rely on analytic expressions of harmonic functions, allowing us to tackle considerably more general problems. While we illustrate our approach using the Kou process, a two-sided exponential jump diffusion in Sections \ref{subsec:Kou-infinite} and \ref{subsec:Kou-finite}, our method does not depend on the specific choice of processes (See Corollary \ref{coro:naive-algorithm}). 
Section  \ref{subsec:multi} addresses two-dimensional diffusion processes--a class of problems for which tractable solutions are even scarcer in the literature. Specifically, we consider $X$ to be a two-dimensional Brownian motion and select a payoff function such that the corresponding stopping region is convex, leveraging Corollary \ref{coro:convex-algorithm}. To establish the convexity of the stopping region in our setting, we partly adapt the proof strategy of Proposition 4.1 in \cite{christensen2019optimal}.

In Section \ref{subsec:Kou-infinite}, we consider the infinite-horizon problem, while in Section \ref{subsec:Kou-finite}, we consider the finite-horizon problem.

\subsection{Infinite-Horizon Optimal Stopping}\label{subsec:Kou-infinite}
We consider a one-dimensional Kou jump-diffusion (\cite{kou})

$$
    dX_t=\mu\,dt+\sigma\,dB_t+dJ_t,
$$
where \(B=(B_t)_{t\geq0}\) is a standard Brownian motion, \(\sigma>0\), and \(J\) is a compound Poisson process with constant jump intensity \(\lambda>0\). The jump sizes are assumed to have a symmetric double-exponential distribution,

$$
    f_Y(y)
    =
    \frac{\eta}{2}e^{-\eta |y|},
    \qquad y\in\mathbb R,
$$
where \(\eta>0\). 
We consider a reward function consisting of two Gaussian peaks,
\[
    g(x)
    =
    \exp\left(
        -\frac{(x-a)^2}{2s^2}
    \right)
    +
    p\exp\left(
        -\frac{(x+a)^2}{2s^2}
    \right),
\]
where $p>0$ is a constant, \(a>0\) determines the locations of the two peaks and \(s>0\) controls their width. 
We assume that \(a>s\), so that the reward function \(g\) has two distinct local maxima.
We solve this problem using the algorithm in Corollary \ref{coro:naive-algorithm}. In the numerical experiments, we set
\[(\mu,\sigma,\lambda,\eta,p,a,s)=(1,0.8,0.2,2,2,3,0.6).\]
Figure \ref{fig:Dual_Kou}  visually defines the optimal stopping problem by plotting the reward function $g$, the computed value function $v$, and the computed $A_\infty$  together. The underlying reward function $g$, characterized by two distinct peaks, is represented by the black dashed line. The solid blue line traces the value function $v$. The set $A_\infty$ is plotted on this blue line as red dots.
Furthermore, Figure \ref{fig:DualityGap}  illustrates the convergence of the duality gap $D_n=D(A_n)$ as the iteration counter $n$ increases. 
The magnitude of the gap \(D_n\), shown by the solid blue line with circular markers, exhibits convergence.
%The error eventually flattens out asymptotically as the algorithm proceeds beyond $70$ iterations.

\begin{figure}
    \centering
    \includegraphics[width=0.5\linewidth]{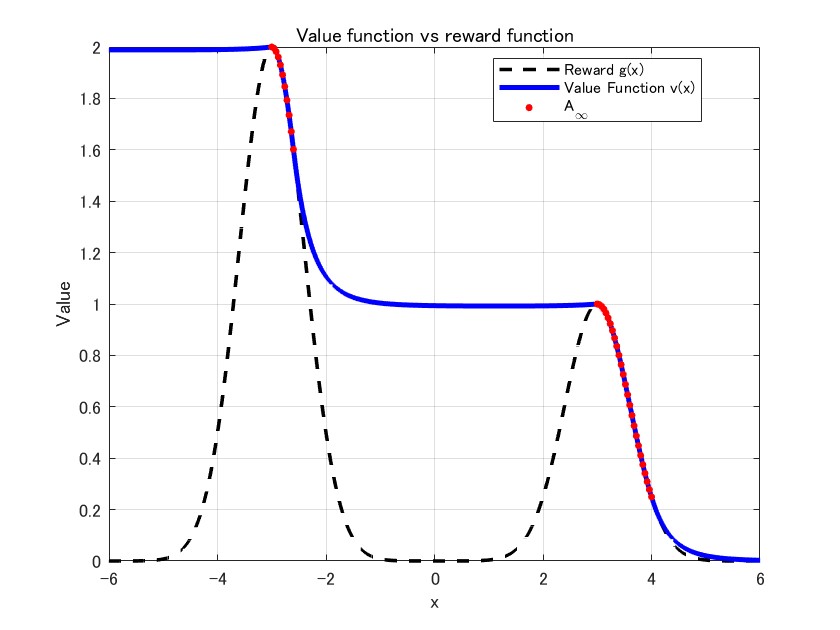}
     \captionsetup{font=scriptsize}
    \caption{Optimal stopping problem visualization: The reward function $g(x)$ (black dashed line), the computed value function $v(x)$ (solid blue line), and the set $A_\infty$ highlighted by the red dotted segments.}
    \label{fig:Dual_Kou}
\end{figure}
\begin{figure}
    \centering
    \includegraphics[width=0.5\linewidth]{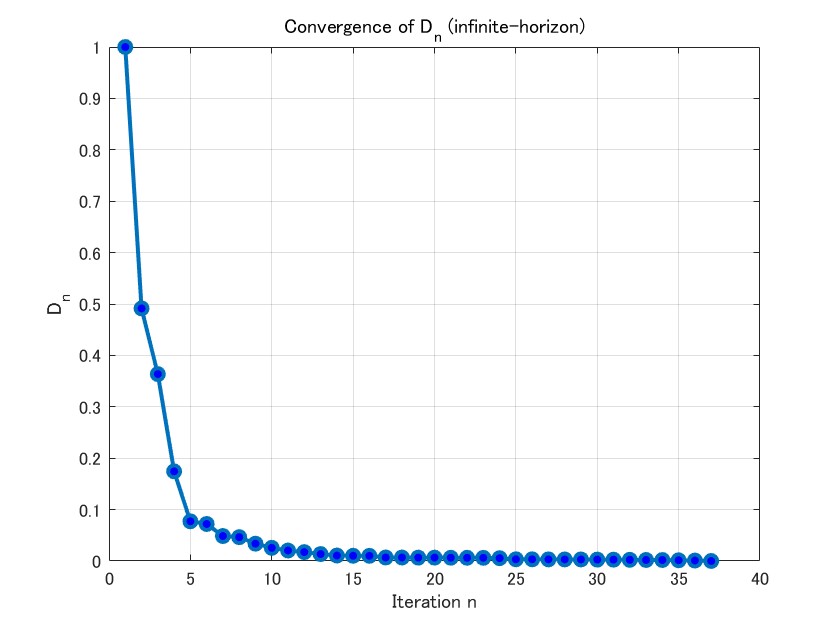}
     \captionsetup{font=scriptsize}
    \caption{Convergence of the duality gap $D_n = D(A_n)$ to zero as a function of the iteration counter $n$.}
    \label{fig:DualityGap}
\end{figure}

\subsection{Finite-Horizon Optimal Stopping}\label{subsec:Kou-finite}
 We solve the finite-horizon optimal stopping problem with maturity \(T=1\). Except for the finite maturity, all the other model parameters and settings are the same as in Section \ref{subsec:Kou-infinite}. The value functions \(x\mapsto v(t,x)\) at \(t=0,0.25,0.5,0.75,\) and \(1\) are shown in Figure \ref{fig:Dual_KouF}. The thick red line represents the value function obtained in Section \ref{subsec:Kou-infinite} for the infinite-horizon problem. As the maturity approaches, the value function decreases, while the set $A_\infty$ becomes larger, as can be seen from the figure.
Figure \ref{fig:DualityGapF} shows the convergence of the duality gap. 
%After approximately 500 iterations, the duality gap is of order \(10^{-5}\).

\begin{figure}
    \centering
    \includegraphics[width=0.6\linewidth]{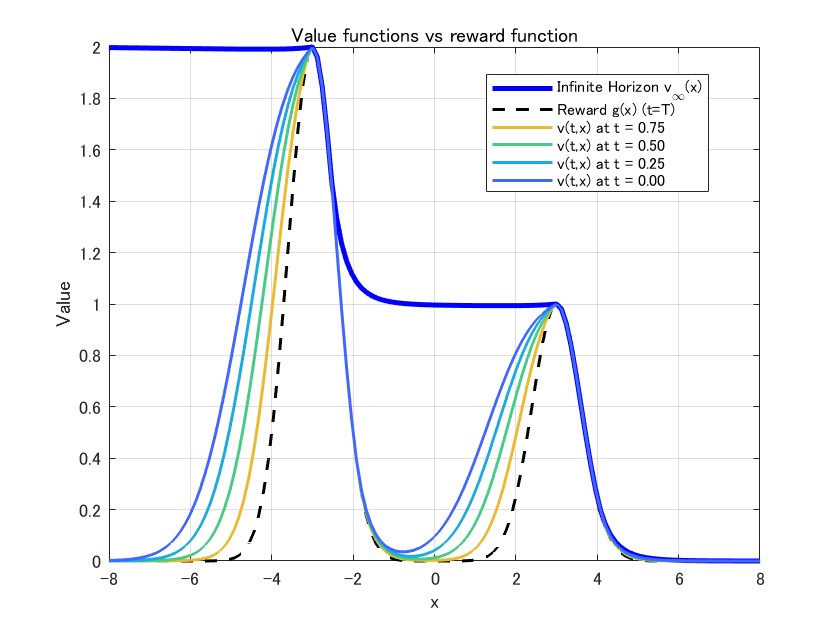}
     \captionsetup{font=scriptsize}
    \caption{ Comparison of the finite-horizon value functions $v(t,x)$ at various time slices ($t = 0.0, 0.25, 0.50, 0.75$), the reward function $g(x)$ (dashed black line), and the infinite-horizon stationary value function $v_\infty(x)$ (thick red line).}
    \label{fig:Dual_KouF}
\end{figure}
\begin{figure}
    \centering
    \includegraphics[width=0.5\linewidth]{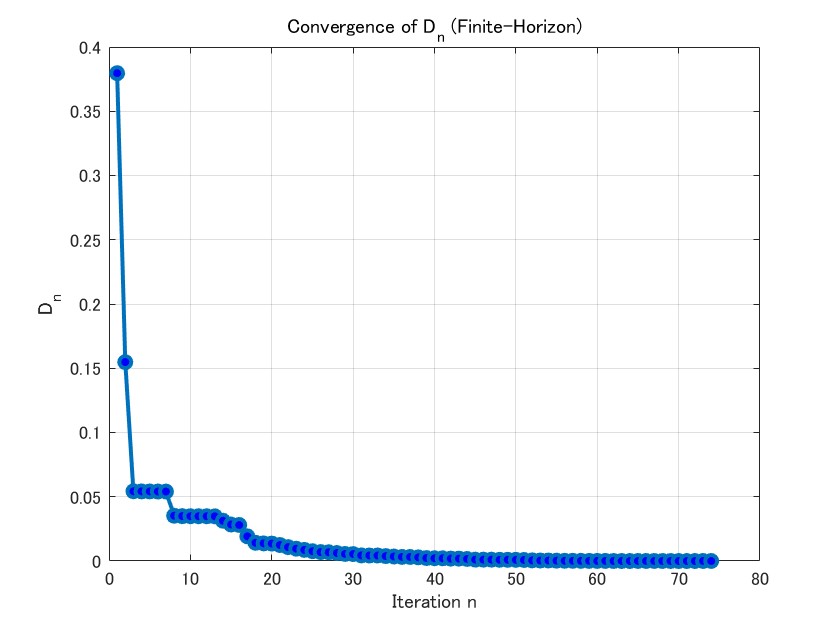}
     \captionsetup{font=scriptsize}
    \caption{Convergence of the duality gap $D_n = D(A_n)$ to zero as a function of the iteration counter $n$.}
    \label{fig:DualityGapF}
\end{figure}

\subsection{Multi-dimensional case}\label{subsec:multi}
We consider an optimal stopping problem driven by a two-dimensional Brownian motion. Let $X=(W^{(1)}, W^{(2)})$,
where $W^{(1)}$ and $W^{(2)}$ are one-dimensional Brownian motions with correlation coefficient $\rho \in [-1,1]$.
For a discount rate \(q>0\), we consider the optimal stopping problem
$$
v(x,y)
:=
\sup_{\tau\in\mathcal{T}}
\mathbb{E}_{(x,y)}
\left[
e^{-q\tau}g(X_\tau)
\right],
$$
where the reward function is given by
$$
g(x,y):=(K-ax^2-by^2)_+,
\qquad a,b,K>0.
$$

Since the stopping region of this problem is convex, which is shown in Proposition \ref{prop:convex-2D} in Appendix, we apply Corollary \ref{coro:convex-algorithm}.
In the numerical experiments, we set
\[(q, K, a,b,\rho)=(0.1, 1, 1,2,0.1).\]
Figures \ref{fig:StopRegion_2D}, \ref{fig:Value_2D}, and \ref{fig:DualityGap_2D} show the results of the numerical analysis.

Figure~\ref{fig:StopRegion_2D} illustrates the set $A_\infty$ obtained. 
As shown in the plot, the constructed stopping boundary exhibits a tilted elliptical shape.
The red dots indicate the points that maximize the error function at each iteration, i.e.,
$x_n=\arg\max e_n.$
Figure~\ref{fig:Value_2D} shows the three-dimensional profile of the computed value function $v$ alongside reward function $g$. 
Figure~\ref{fig:DualityGap_2D} depicts the historical evolution of the duality gap $D_n$ across the iterations. 
%The proposed algorithm converged after 87 iterations, with the duality gap falling below the threshold of \(10^{-5}\).

\begin{figure}
    \centering
    \includegraphics[width=0.5\linewidth]{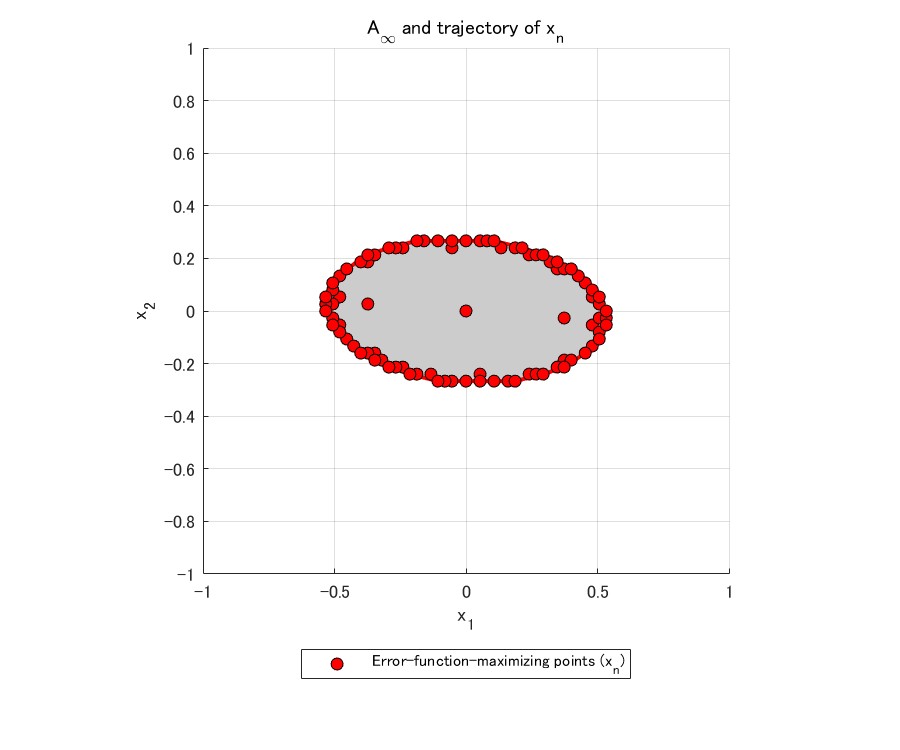}
     \captionsetup{font=scriptsize}
    \caption{The constructed $A_\infty$ for correlation coefficient $\rho = 0.10$, along with the trajectory of error-maximizing points $x_n$ (red circles).}
    \label{fig:StopRegion_2D}
\end{figure}
\begin{figure}
    \centering
    \includegraphics[width=0.5\linewidth]{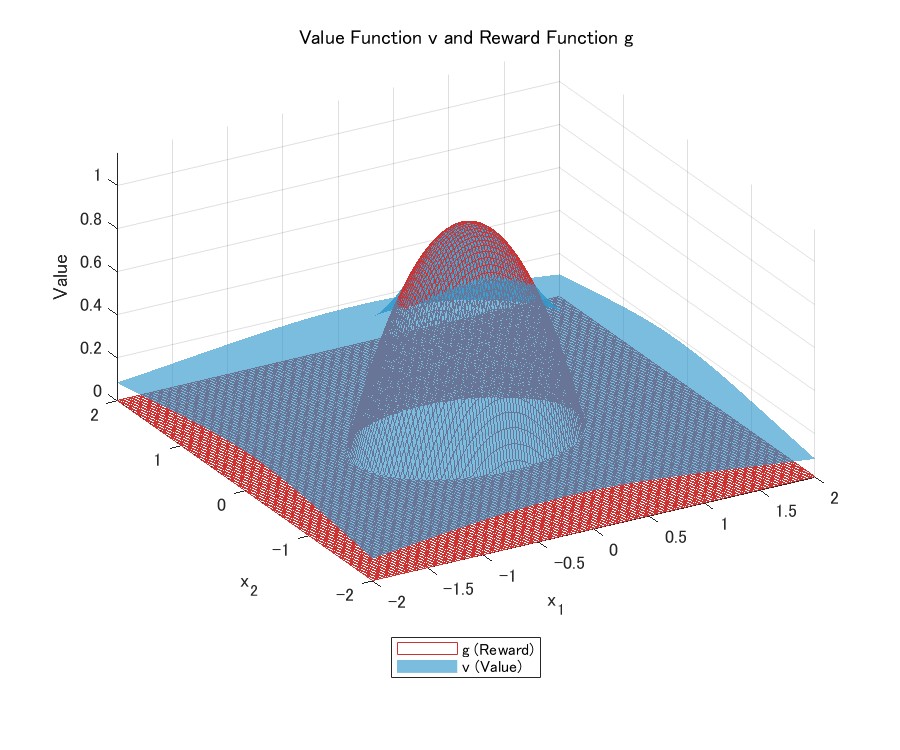}
     \captionsetup{font=scriptsize}
    \caption{Comparison between the reward function $g$ (red mesh) and the approximated value function $v$ (blue surface).}
  \label{fig:value_function}
    \label{fig:Value_2D}
\end{figure}
\begin{figure}
    \centering
    \includegraphics[width=0.5\linewidth]{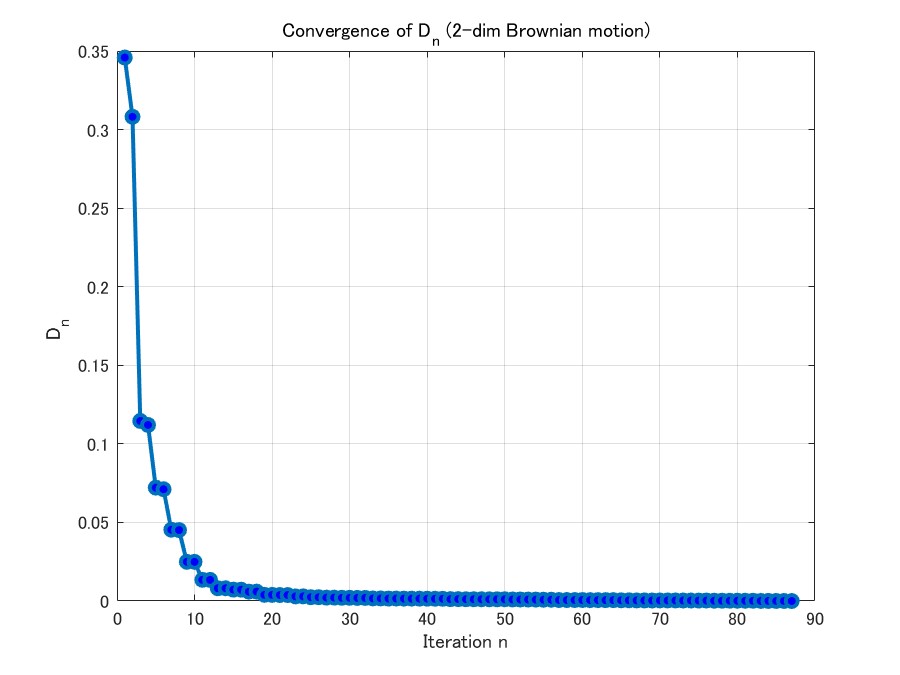}
     \captionsetup{font=scriptsize}
    \caption{Convergence of the duality gap $D_n = D(A_n)$ to zero as a function of the iteration counter $n$.}
    \label{fig:DualityGap_2D}
\end{figure}

\appendix

%\clearpage
\section{Convexity of the stopping region}
\begin{proposition}\label{prop:convex-2D}
The stopping region $\Gamma$ of the optimal stopping in Section \ref{subsec:multi} is convex.
\end{proposition}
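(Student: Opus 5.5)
We plan to show directly that for $z_0=(x_0,y_0),z_1=(x_1,y_1)\in\Gamma$ and $\lambda\in[0,1]$ the point $z_\lambda=(1-\lambda)z_0+\lambda z_1$ satisfies $v(z_\lambda)\le g(z_\lambda)$. Following the strategy of Proposition 4.1 in \cite{christensen2019optimal}, we use the translation structure of Brownian motion. Under $\p_{z}$ we have $X_t=z+W_t$, where $W$ is a correlated Brownian motion started at $0$ whose law does not depend on $z$. Hence for any stopping time $\tau$ of $W$ the same $\tau$ can be used from every starting point.

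\textbf{Step 1 (reduction).} First note that $\Gamma\subset\{g>0\}=\{ax^2+by^2<K\}$. Indeed, at any $z$ with $g(z)=0$ we have $v(z)>0$, because stopping at the exit time of $\{g=0\}^c$-neighbourhood yields positive expected discounted payoff. Consequently, on the relevant points $g(z)=K-ax^2-by^2=:G(z)$, which is a smooth concave quadratic. Let $Q(z)=ax^2+by^2$, so that $G=K-Q$. For a fixed stopping time $\tau$ we get
\[
\E\big[e^{-q\tau}g(z+W_\tau)\big]\le \E\big[e^{-q\tau}\,(K-Q(z+W_\tau))_+\big].
\]

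\textbf{Step 2 (key inequality).} Fix $\tau$ and consider $\phi_\tau(z):=\E[e^{-q\tau}(K-Q(z+W_\tau))_+]$. We would like to show that $z\mapsto \phi_\tau(z)-g(z)$ is quasi-convex, or more concretely that $\sup_\tau \phi_\tau(z)-G(z)$ is convex on $\{Q<K\}$. Then $\{v-g\le 0\}\cap\{Q<K\}$ is a sublevel set of a convex function and hence convex. To this end we expand
\[
Q(z+w)=Q(z)+2\langle Mz,w\rangle+Q(w),\qquad M=\mathrm{diag}(a,b).
\]
This gives
\[
\phi_\tau(z)-G(z)=\E\Big[\big(e^{-q\tau}(G(z)-2\langle Mz,W_\tau\rangle-Q(W_\tau))_+\big)\Big]-G(z).
\]
The natural approach is to write $(u)_+=\sup_{\xi\in[0,1]}\xi u$ and to allow randomized/indicator controls $\xi=\1_{\{\cdot\}}$. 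Then
\[
v(z)-G(z)=\sup_{\tau,\,\xi}\E\big[e^{-q\tau}\xi\,(G(z)-2\langle Mz,W_\tau\rangle-Q(W_\tau))\big]-G(z),
\]
where $\xi$ ranges over $\F_\tau$-measurable $[0,1]$-valued variables. For fixed $(\tau,\xi)$ this expression equals
\[
-G(z)\big(1-\E[e^{-q\tau}\xi]\big)-2\langle Mz,\E[e^{-q\tau}\xi W_\tau]\rangle-\E[e^{-q\tau}\xi Q(W_\tau)].
\]
Since $-G=Q-K$ is convex in $z$ and $1-\E[e^{-q\tau}\xi]\ge0$, the first term is convex in $z$. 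The second term is linear and the third is constant. A supremum of convex functions is convex, so $v-G$ is convex on $\R^2$. It then follows that $\Gamma=\{v-G\le0\}\cap\{Q<K\}$ is an intersection of convex sets, and is therefore convex.

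\textbf{Main obstacle.} The delicate points are the following. First, $\xi$ must be independent of $z$. This holds because the controls are defined on the $W$-space, and the supremum over $\xi$ recovers the positive part pointwise by taking $\xi=\1_{\{G(z)-\dots>0\}}$, which is a legitimate choice for each fixed $z$. Second, one must justify that $v-G\le 0$ on $\{Q<K\}$ is exactly $\Gamma$ there, and that points with $Q\ge K$ are never in $\Gamma$; we expect the latter to need a brief argument, namely that a positive-probability escape into $\{Q<K\}$ gives positive value. Finally, integrability of $W_\tau$ on $\{\tau=\infty\}$ is handled by the factor $e^{-q\tau}$ and boundedness of $g$, for instance by truncating $\tau\wedge T$ and letting $T\to\infty$.
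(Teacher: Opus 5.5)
Your argument is correct and is essentially the paper's: both use translation invariance to write, for a fixed stopping time $\tau$, the quantity $\E[e^{-q\tau}G(z+W_\tau)]-G(z)$ as a quadratic in $z$ with leading part $(1-\E[e^{-q\tau}])Q(z)\ge 0$, and then take the supremum over $\tau$ (equivalently, intersect the sublevel sets). The only difference is how the positive part is removed: the paper proves $v=v_1$ for the untruncated payoff $g_1=K-ax^2-by^2$, while your $\F_\tau$-measurable $\xi\in[0,1]$ gives that identity directly, since for indicators it just means stopping at $\tau$ on $\{\xi=1\}$ and never on $\{\xi=0\}$. \emph{One small slip:} in Step 1 the stopping time should be the hitting time of a set strictly inside $\{Q<K\}$, such as $\{Q\le K/2\}$, because stopping upon first entering $\{Q<K\}$ pays $g=0$ on $\{Q=K\}$.
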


\begin{proof}
Let
\[
g_1(x,y):=K-ax^2-by^2;
\]
that is $g(x,y)=(g_1(x,y))_+.$
Denote by $v$ and $v_1$ the value functions
corresponding to $g$ and $g_1$, respectively, and let
$\Gamma$ and $\Gamma_1$ be their stopping regions.
We first show that 
$v=v_1.$
Since $g\geq g_1$, we immediately have
$v\geq v_1.$
On the other hand, we claim that
\[
v_1(x,y)\geq g(x,y)
\qquad\text{for all }(x,y)\in\mathbb R^2.
\]
Indeed, since $\tau=0$ is admissible,
\[
v_1(x,y)\geq g_1(x,y).
\]
If $g_1(x,y)\geq0$, then $g(x,y)=g_1(x,y)$ and hence
\[
v_1(x,y)\geq g_1(x,y)=g(x,y).
\]
If $g_1(x,y)<0$, then $g(x,y)=0$. Moreover, by the recurrence of
the two-dimensional Brownian motion, the process can return to the
region $\{g_1>0\}$, and hence
\[
v_1(x,y)\geq0=g(x,y).
\]
Thus $v_1$ is a $q$-excessive majorant of $g$. Since $v$ is the
smallest $q$-excessive majorant of $g$, it follows that $v\leq v_1.$
Consequently, $v=v_1.$

Next, we show that $\Gamma=\Gamma_1.$
It follows  from $v=v_1$ and $v \leq g$ that $\Gamma_1 \subset \Gamma$.
To prove the reversed inclusion, let $x \in \Gamma$.
Using the recurrence of the two-dimensional Brownian motion again,
$g(x)>0$, and hence $g(x)=g_1(x).$
Therefore, we obtain $g_1(x)=g(x)$ and $x \in \Gamma_1$.
Consequently, $\Gamma=\Gamma_1$.

We next consider the function
\begin{align} \label{eq:qtau}
q^\tau(x,y):=
\mathbb E_{(x,y)}
\left[
e^{-q\tau}g_1(X_\tau)
\right]
-g_1(x,y),
\end{align}

where $\tau$ is an arbitrary stopping time. Put, for $i = 1, 2$,
$$
A_\tau:=\mathbb E[e^{-q\tau}],\qquad
B_\tau^{(i)}
:=\mathbb E[e^{-q\tau}W_\tau^{(i)}],\qquad
C_\tau^{(i)}
:=\mathbb E[e^{-q\tau}(W_\tau^{(i)})^2].
$$

Using the explicit starting state-dependence of $X$, we obtain
\[
\begin{aligned}
q^\tau(x,y)
&=(1-A_\tau)(ax^2+by^2)
-2aB_\tau^{(1)}x
-2bB_\tau^{(2)}y  
+K(A_\tau-1)
-aC_\tau^{(1)}
-bC_\tau^{(2)}.\\
\end{aligned}
\]

Since $0\leq A_\tau\leq1$ by $q>0$ and both $a(1-A_\tau)$ and $b(1-A_\tau)$ are positive,
\[
\left\{(x,y):q^\tau(x,y)\leq0\right\}
\]
is an ellipse (possibly empty or degenerate), and therefore convex.
Note that $(x,y) \in \Gamma_1=\Gamma$ if and only if $q^\tau(x,y) \leq 0$ for all $\tau$.
Hence, we have
$\Gamma = \bigcap_{\tau}
\left\{
(x,y):q^\tau(x,y)\leq0
\right\}.$
Therefore, $\Gamma$ is convex.
\end{proof}
The characterization of the stopping region in terms of \eqref{eq:qtau} follows the argument used in the proof of Proposition 4.1 in \cite{christensen2019optimal}.

%%%%%%%%%%%%%%%%%%%%%%%%%%%%%%%%%%%%%%%%%%%
\bibliographystyle{abbrv}
{\small \bibliography{references}}

\end{document}